\newtheorem{thm}{Theorem}[section]
\newtheorem{lem}[thm]{Lemma}
\newtheorem{conj}[thm]{Conjecture}
\newtheorem{prop}[thm]{Proposition}
\newtheorem{cor}[thm]{Corollary}
\newtheorem{rmk}[thm]{Remark}
\newtheorem{ques}[thm]{Question}
\newtheorem{ex}[thm]{Example}
\def\O{{\mathcal O}}
\def\P{{\mathbb P}}
\def\I{{\mathcal I}}
\def\Z{{\mathbb Z}}
\def\R{\mathbb R}
\def\C{{\mathbb C}}
\def\Pic{\mathop{\rm Pic}}
\def\Sing{\mathop{\rm Sing}}
\def\Cl{\mathop{\rm Cl}}
\def\Proj{\mathop{\rm Proj}}
\def\Spec{\mathop{\rm Spec}}
\def\fm{\mathfrak m}
\def\fn{\mathfrak n}
\def\supp{\mathop{\rm Supp}}
\def\deg{\mathop{\rm deg}}
\def\ss{\vskip .2 in}
\def\blu{\textcolor{blue}}
\def\ss{\vskip.10in}
\def\rdA{\mathbf A}
\def\rdD{\mathbf D}
\def\rdE{\mathbf E}
\title{Geometric divisors in normal local domains}
\author{John Brevik}
\address{California State University at Long Beach, 
Department of Mathematics and Statistics, Long Beach, CA 90840}
\email{john\_brevik@csulb.edu}
\author{Scott Nollet}
\address{Texas Christian University, Department of Mathematics, 
Fort Worth, TX 76129}
\email{s.nollet@tcu.edu}
\subjclass[2000]{Primary: 14B07, 14H10, 14H50}
\begin{document}
\bibliographystyle{plain}

\begin{abstract} 
Let $A$ be the local ring at a point of a normal complex  
variety with completion $R = \widehat A$. 
Srinivas has asked about the possible images of the induced map 
$\Cl A \to \Cl R$ over all geometric normal domains $A$ with fixed completion $R$. 
We use Noether-Lefschetz theory to prove that all finitely generated subgroups 
are possible in some familiar cases.  
As a byproduct we show that every finitely generated abelian group appears as 
the class group of the local ring at the vertex of a cone over some 
smooth complex variety of each positive dimension. 
\end{abstract}

\maketitle

\section{Introduction}

In his survey on geometric methods in commutative 
algebra \cite{srinivas}, Srinivas poses several interesting questions about {\it geometric local rings}, 
the local rings $A = \O_{X,x}$ of a point $x$ on a complex algebraic variety $X$, 
or equivalently local domains of essentially finite type over $\mathbb C$. 
For $A$ normal with completion $R = \widehat A$, there is a natural map  

\begin{equation}\label{inject}
 \iota: \Cl A \hookrightarrow \Cl R
\end{equation}
whose injectivity is attributed to Mori \cite[Thm. 6.5]{samuel}, so we call $\iota$ the {\it Mori map}. 
Many non-isomorphic geometric rings can have isomorphic completions. 
For example, the local rings at smooth points of $d$-dimensional 
complex varieties have isomorphic completions by Cohen's theorem~\cite[Thm. 60, Cor. 2]{M1},
but unless the varieties are birational they do not share the same fraction field. 
Srinivas asks about the images of the inclusion (\ref{inject}) as $A$ varies with 
fixed completion $R$, the most ambitious and open-ended question being: 

\begin{ques}\label{Q1}{\em
Which subgroups of $\Cl R$ arise as images 
$\Cl A \hookrightarrow \Cl R$ where $R \cong \widehat A$?  
\em}\end{ques}

\begin{ex}\label{infinite}{\em
Srinivas notes that not all images are possible, because the complete local ring 
$R = \mathbb C [[x,y,z]]/(x^2+y^3+z^5)$ has class group $\Cl R \cong \mathbb C$, 
but for every geometric local ring $A$ with $\widehat A \cong R$, the image 
$\Cl A \hookrightarrow \Cl R$ must be finitely generated \cite[Ex. 3.9]{srinivas}. 
He reasons that if $A = \O_{X,x}$ for a surface $X$ and $Y \to X$ is a resolution of 
singularities, then the induced map $\Pic Y \to \Cl A$ is surjective. 
Since $\Pic^0 Y$ is projective, it 
has trivial image in the affine group ${\mathbb G}_a = \mathbb C$, therefore 
$\Cl A \to \mathbb C$ factors through the finitely generated Neron-Severi group. 
\em}\end{ex}
 
\begin{rmk}\label{notC}{\em
Example \ref{infinite} shows that in fact $\mathbb C$ is never the class group of 
any geometric local ring. More generally, the argument applies to any infinitely 
generated group whose connected component of the identity is an affine group. 
\em}\end{rmk} 

In view of Example \ref{infinite}, Srinivas concludes that ``probably the only 
reasonable general question" one can ask is for the {\em smallest} possible images. 
If $A$ is the codimension $r$ quotient of a regular local ring $B$, the dualizing 
module $\omega_A = \text{Ext}_A^r (B,A) \in \Cl R$ is trivial if and only if 
$R$ is Gorenstein \cite{murthy}. This class is independent of $B$ and the image in 
$\Cl R$ is independent of $A$, because 
$\widehat{\omega_A} \cong \omega_{\widehat A}$ \cite[Thm. 3.3.5(c)]{BH}.
These considerations led Srinivas to ask the following \cite[Ques. 3.5 and 3.7]{srinivas}: 

\begin{ques}\label{Q2}{\em
Let $R$ be the completion of a normal geometric local ring. 
\begin{enumerate}
\item[(a)] If $R$ is Gorenstein, is $R$ the completion of a UFD?
\item[(b)] More generally, does there exist $B$ for which 
$\Cl B  = \langle \omega_R \rangle \subset \Cl R$?  
\end{enumerate}
\em}\end{ques}

Here there has been progress, starting with Grothendieck's solution 
\cite[XI, Cor. 3.14]{SGA2} to Samuel's conjecture stating that a local 
complete intersection ring that is factorial in codimension $\le 3$ is 
a UFD. 
Hartshorne and Ogus proved that if $R$ has an isolated singularity, depth $\geq 3$ and 
embedding dimension at most $2 \dim R - 3$, then $R$ is a UFD \cite{HO}.
Expanding on Srinivas' observation that local rings of rational double 
point surface singularities are completions of UFDs \cite{srinivas87}, 
Parameswaran and Srinivas \cite{parasrini} used methods along the lines of 
Lefschetz' proof of Noether's theorem to construct a UFD defining a singularity 
analytically isomorphic to isolated complete intersection singularity 
of dimension $2$ or $3$. We gave a positive answer for hypersurface singularities 
of dimension $\geq 2$ \cite{BN4} and arbitrary local complete intersection singularities of dimension $\geq 3$ \cite{BN5}. The only result we've seen in the 
non-Gorenstein case is due to van Straten and Parameswaran \cite{pvs}, 
who showed that Question \ref{Q2} (a) has a positive answer if $\dim R = 2$. 
Heitmann has characterized completions of UFDs \cite{heitmann}, 
but his constructions are rarely geometric. 

While Question \ref{Q2} has received much attention, Question \ref{Q1} remains wide open. 
In an effort to better understand it, 
we call an element $\alpha \in \Cl R$ a 
{\it geometric divisor} if it is in the image of the 
inclusion (\ref{inject}) for some geometric local ring $A$ with 
$R = \widehat A$. In view of Example \ref{infinite}, we pose the following:

\begin{ques}\label{Q3}{\em Let $R$ be the completion of a normal geometric local ring. 
\begin{enumerate}
\item[(a)] Given any finitely generated group $G \subset \Cl R$, 
is there a geometric local ring $B$ with $\widehat B = R$ and $G = \Cl B$? 
\item[(b)] Given $\alpha_1, \dots \alpha_r \in \Cl R$, is there a geometric local 
ring $B$ with $\widehat B = R$ and 
$\alpha_i \in \Cl B$ for each $1 \leq i \leq r$? 
\item[(c)] Is every $\alpha \in \Cl R$ a geometric divisor? 
\item[(d)] Do the geometric divisors form a subgroup of $\Cl R$? 
\end{enumerate}
\em}\end{ques}

Note the implications $(a) \Rightarrow (b) \Rightarrow (c) \Rightarrow (d)$. 
Our ignorance about the nature of geometric divisors is revealed in part (c): 
could there be transcendental divisors 
that cannot be accessed geometrically? 
The methods of \cite{BN4,BN5} inspire the following possibility: 

\begin{conj}\label{whynot}
Let $R$ be the completion of a normal geometric complete intersection ring. 
Then for any finitely generated group $G \subset \Cl R$, there is a geometric local 
ring $B$ with $\widehat B = R$ and $G = \Cl B \subset \Cl R$. 
\end{conj}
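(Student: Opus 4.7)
The plan is to adapt the Noether-Lefschetz approach of \cite{BN4,BN5} --- which produced a geometric UFD completing to $R$, i.e.\ handled $G = \langle\omega_R\rangle$ for complete intersections --- to realize an arbitrary prescribed finitely generated subgroup $G \subset \Cl R$ as the image of a Mori map. The earlier arguments used a movement of complete intersections to \emph{kill} all Weil divisor classes save $\omega_R$; here the argument must be refined to \emph{retain} exactly the classes in $G$ and kill the rest.

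First I would realize $R \cong \widehat{\O_{V,p}}$ for a normal projective complete intersection $V \subset \P^N$ whose local equations at $p$ are polynomial truncations of those defining $R$, following the recipe in \cite{BN5}. Given generators $\alpha_1,\dots,\alpha_r$ of $G$, the next step is to construct a preliminary deformation $V'$ of $V$, still completing to $R$ at $p$, that contains algebraic Weil divisors $D_1,\dots,D_r \subset V'$ whose restrictions to $\Spec R$ represent the prescribed classes in $\Cl R$. Since an abstract height-one prime of $R$ need not be algebraic, producing the $D_i$ is itself nontrivial and demands a delicate combination of Artin approximation with an adjustment of the global equations to force the desired subschemes to appear. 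With the $D_i$ in hand, assemble a parameter space $T$ of complete intersections $V_t$ deforming $V'$ subject to the two linear conditions: (i) $V_t \equiv V' \pmod{\fm_p^N}$ for $N$ sufficiently large, which fixes $\widehat{\O_{V_t,p}} \cong R$; and (ii) $D_i \subset V_t$ for each $i$.

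The heart of the argument is a Noether-Lefschetz theorem for the restricted family $T$: for very general $t \in T$, the Picard group of a resolution of $V_t$ should be generated by the hyperplane class, the exceptional divisors over the singularity at $p$, and the proper transforms of the $D_i$ --- nothing more. Pulling back along the Mori map then yields image exactly $\langle\alpha_1,\dots,\alpha_r\rangle = G$, since the hyperplane and exceptional classes vanish locally at $p$ once one chooses a hyperplane avoiding $p$. The principal obstacle will be proving this Noether-Lefschetz statement in the presence of the two simultaneous infinitesimal constraints (i) and (ii), which jointly restrict the universal family to a codimension-finite linear subseries; one must verify that the Hodge-theoretic and Koszul-cohomology arguments underlying \cite{BN4,BN5} retain enough variational freedom along the restricted family to force the generic Picard rank to the minimum allowed value. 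A secondary and potentially substantial difficulty is the preliminary construction of the $D_i$ themselves, which may in turn require its own deformation-theoretic analysis of how height-one primes of $R$ can be globalized through deformations of $V$.
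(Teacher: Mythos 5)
The statement you are trying to prove is presented in the paper as an open \emph{conjecture}, not a theorem: the paper proves it only in special cases (Theorem \ref{main} for finitely generated subgroups $G$ that already lie in $\Cl \O_{V,p} \subset \Cl R$ for the given geometric model $V$, Corollary \ref{cone} for cones over varieties with the stated cohomology vanishing, and the rational double point case from \cite{BN4}). Your proposal contains a genuine gap at precisely the step that keeps the conjecture open. The second half of your plan --- given algebraic integral divisors $D_i$ on a geometric model, choose a base locus forcing the very general complete intersection through it to have local class group exactly $\langle D_1,\dots,D_r\rangle$ while preserving the analytic isomorphism type at $p$ --- is essentially Theorem \ref{fgsubgp}, and there you do not need a new Noether--Lefschetz theorem for a doubly-constrained family: the paper encodes both constraints in a single base locus $Z$ defined by $(\bigcap I_{C_i}\cdot J^2)+I_V$ and quotes the existing theorems of \cite{BN,BN5}, the real work being to verify that the Ruiz-type automorphism $\alpha$ of $\C[[x_1,\dots,x_n]]$ carries each completed prime $\widehat Q_{i,j}$ to itself so that the classes land where expected.

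The unclosed gap is your ``preliminary deformation $V'$ containing algebraic Weil divisors $D_i$ representing the prescribed classes in $\Cl R$.'' For an arbitrary finitely generated $G\subset \Cl R$ the generators $\alpha_i$ need not lie in the image of the Mori map $\Cl \O_{V,p}\hookrightarrow \Cl R$ for \emph{any} geometric $V$ with the given completion; this is exactly Question \ref{Q3}(b)--(c), which the paper states is wide open (``could there be transcendental divisors that cannot be accessed geometrically?''). Your appeal to Artin approximation does not address this: the obstruction is not one of approximating a formal subscheme by an algebraic one inside a fixed model, but of whether the \emph{divisor class} can be hit at all by some geometric ring. Example \ref{mystery} makes the difficulty concrete: for the cone over a smooth plane quartic the Mori map has cokernel isomorphic to $\mathbb C$, and it is unknown whether any class outside $\Cl\O_{V,p}$ is geometric. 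Until that step is supplied, your argument proves only the case $G\subset \Cl\O_{V,p}$, i.e., Theorem \ref{main}, not the conjecture.
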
 

In the best-understood case that $R$ is a completed rational double 
point surface singularity, we used our Noether-Lefschetz theorem \cite{BN} with 
carefully chosen base loci to show that every subgroup of $\Cl R$ is the image of 
$\Cl A$ for the local ring $A = \O_{S,p}$ of a surface 
$S \subset \mathbb P^3$ \cite{BN4}, thereby proving Conjecture \ref{whynot} 
for two dimensional rational double point singularities. 

\begin{rmk} {\em This result contrasts sharply with case in which the function field is rational, 
where Mohan Kumar \cite{MK} shows that for most $\rdA_n$ and $\rdE_n$ singularities there is in fact only one possible isomorphism class for the local ring (and thus the class group). The three exceptions, with two possibilities each, are the $\rdE_8$, $\rdA_7$, and $\rdA_8$; for all other $\rdE_n$ and $\rdA_n$ singularities, the Mori map is an isomorphism.  Since an $\rdE_8$ is a UFD under completion, any $\rdE_8$ is a UFD. By following Mohan Kumar's constructions of the $\rdA_7$ and $\rdA_8$ carefully, one sees that the image of the Mori map for the $\rdA_7$ is either the full completed class group $\Z/8\Z$ or the subgroup of order $4$, while in the $\rdA_8$ case the Mori map is either surjective onto $\Z/9\Z$ or its image is of order $3$.}
\end{rmk}

In this paper we give more positive evidence for Conjecture~\ref{whynot}. 
Our main tool is the following theorem, which proves $(b) \Rightarrow (a)$ in 
Question \ref{Q3} for most complete intersection domains.

\begin{thm}\label{main} Let $p \in V \subset \mathbb P^n$ be a normal complete intersection 
point on a complex variety, where $n=3$ if $\dim V =2$. 
Then for any finitely generated subgroup $G \subset \Cl \O_{V,p}$, 
there is a geometric local ring $B$ with 
$\widehat B = \widehat \O_{V,p}$ and $\Cl B = G \subset \Cl \O_{V,p} \subset \Cl \widehat \O_{V,p}$. 
\end{thm}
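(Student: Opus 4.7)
The plan is to extend the Noether-Lefschetz-with-base-locus strategy we used in \cite{BN4, BN5} (which handled the special case $G = 0$) by enlarging the base locus so that the generic complete intersection in the resulting linear system realizes exactly the subgroup $G$. More precisely, I would try to produce a complete intersection $W \subset \mathbb{P}^n$ (of some suitable multi-degree) passing through $p$ whose formal germ at $p$ equals that of $V$, so that $\widehat{\O}_{W,p} \cong \widehat{\O}_{V,p}$, and whose local class group $\Cl \O_{W,p}$ is exactly the image of $G$ under the Mori inclusion.

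First I would pick generators $\alpha_1, \dots, \alpha_r$ of $G$ and represent each by an effective Weil divisor germ $D_i$ on $\Spec \O_{V,p}$. After replacing each $D_i$ if necessary by a reduced Cohen-Macaulay curve on $V$ with the same germ at $p$ (obtainable by prime avoidance on $V$), I take Zariski closures to get projective curves $C_i \subset \mathbb{P}^n$ lying on $V$ and set $B = C_1 \cup \dots \cup C_r$. For a large integer $N$ and suitable multi-degrees $(e_1, \dots, e_c) \geq (d_1, \dots, d_c)$ (the multi-degree of $V$), I then form the linear system $\Sigma$ of complete intersections $W \subset \mathbb{P}^n$ of multi-degree $(e_1, \dots, e_c)$ satisfying (i) $W \supset B$ and (ii) the defining equations of $W$ agree with those of $V$ modulo $\mathfrak{m}_p^{N+1}$. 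Both are linear conditions, and $\Sigma$ is nonempty because a suitable perturbation of the equations of $V$ (multiplied by generic linear forms nonzero at $p$ to correct the degree) lies in it.

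Two inputs then finish the argument. First, by finite determinacy of isolated normal complete intersection singularities, condition (ii) with $N$ large (in terms of the singularity type of $V$ at $p$ and of the multiplicities of the $C_i$) yields an isomorphism $\widehat{\O}_{W,p} \cong \widehat{\O}_{V,p}$ that identifies the formal germs of the $C_i$ on $W$ and on $V$. Hence under the two Mori inclusions the classes of $C_i$ on $W$ and on $V$ map to the same element $\alpha_i \in \Cl \widehat{\O}_{V,p}$. Second, by the Noether-Lefschetz theorem with prescribed base locus from \cite{BN} applied to $\Sigma$, for a very general $W \in \Sigma$ and for the multi-degrees chosen large enough, $\Cl \O_{W,p}$ is generated precisely by the classes of the $C_i|_W$ (contributions pulled back from $\mathbb{P}^n$ being trivial in the local class group). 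Combining the two gives $\Cl \O_{W,p} = \langle \alpha_1, \dots, \alpha_r \rangle = G$ as subgroups of $\Cl \widehat{\O}_{V,p}$, as required.

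The hard part will be controlling the NL application: one needs the local class group to come out \emph{equal} to the prescribed subgroup, not merely to contain or be contained in it. The version in \cite{BN} is designed to produce such an equality for a very general member of an appropriate parameter space, but enforcing condition (ii) carves out a proper subvariety of the parameter space cut out by the base-locus condition (i), and one must verify that its very general member still satisfies the genericity hypothesis of \cite{BN}. Overcoming this will likely require delicate choices for $B$ --- reducedness, appropriate postulation, and possibly enlarging $B$ by auxiliary generic curves of controlled degree to kill classes that would otherwise appear --- together with taking the multi-degrees $(e_i)$ sufficiently large relative to $N$ and the degrees of the $C_i$ so that the NL conclusion survives the passage to $\Sigma$. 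The restriction $n = 3$ when $\dim V = 2$ enters precisely because the relevant NL statement is formulated for hypersurfaces in $\mathbb{P}^3$ in that range.
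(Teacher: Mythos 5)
Your overall strategy --- represent generators of $G$ by curves on $V$, put them into a base locus, apply Noether--Lefschetz with base locus, and transport the classes through an analytic isomorphism of germs --- is the same as the paper's. But the two steps you delegate to standard facts are exactly where the content lies, and as written both fail. First, the appeal to finite determinacy of \emph{isolated} complete intersection singularities does not apply: the theorem only assumes $p \in V$ is a normal point, so $\Sing V$ can be positive-dimensional near $p$, non-isolated singularities are not finitely determined, and a jet condition at the single point $p$ cannot control the germ. The paper instead builds an infinitesimal neighborhood of the whole singular locus into the base locus, taking $Z$ to be defined by $(\bigcap I_{C_i}\cdot J^2)+I_V$ with $J$ the Jacobian ideal, so that the perturbation of the equations lies in $J^2 I_C$ and the extension of Ruiz' lemma \cite[Prop. 4.1]{BN5} produces the isomorphism. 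Second, and more seriously, even granting an abstract isomorphism $\widehat{\O}_{V,p}\cong\widehat{\O}_{W,p}$, nothing guarantees that it matches the germ of $C_i$ on $V$ with the germ of $C_i$ on $W$; the authors explicitly flag this as the hard part. The paper gets it because its isomorphism is realized by an explicit change of variables $x_i\mapsto x_i+h_i$ with $h_i\in J\cdot I_C$, so that for $f$ in any minimal prime $\widehat{Q}_{i,j}$ over the completed ideal of $C_i$ the difference $f(x+h)-f(x)$ lies in $(h_1,\dots,h_n)\subseteq I_C\subseteq\widehat{Q}_{i,j}$, whence each $\widehat{Q}_{i,j}$ is preserved and the classes land where they should. (Note also that these completed ideals need not stay prime, so one must track the decomposition $\widehat{Q}_i=\bigcap_j\widehat{Q}_{i,j}$ and verify via Samuel's formula that $C_i$ maps to $\sum_j\widehat{P}_{i,j}$ with coefficients one.) An assertion that finite determinacy ``identifies the formal germs of the $C_i$'' is not a theorem; you need an argument of this kind.

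A second gap is in your choice of the $C_i$: you only arrange them to be reduced. The Noether--Lefschetz theorems of \cite{BN,BN5} generate $\Cl W$, and hence $\Cl\O_{W,p}$, by the \emph{integral components} of the base curve, so if some $C_i$ is reducible you only obtain a group containing $G$ that may be strictly larger, since the group generated by the components of a divisor is in general bigger than the group generated by their sum. You therefore need every class in $\Cl\O_{V,p}$ to be represented by a single integral divisor on $V$, which prime avoidance does not provide; the paper proves this separately (Lemma \ref{intrep}) by adding $nH$ to make the class effective and then running a Bertini/liaison argument: a general hypersurface through $C$ links it to a residual $E$, and the general member of $|\I_E(m+1)|$ is $E\cup D$ with $D$ integral and $D\equiv C \pmod H$. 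With these two repairs your outline becomes essentially the paper's proof; your worry about the jet condition carving out a subvariety of the parameter space then evaporates, because everything is packaged into a single base locus $Z$ and the Noether--Lefschetz theorems are already stated for the very general member of $|H^0(\I_Z(d))|$.
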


\begin{rmk}\label{condition}{\em 
The restriction $n=3$ if $\dim V=2$ shows the reliance of our argument on the Noether-Lefschetz 
theorems \cite{BN,BN5}. We are currently working to remove this restriction.
\em}\end{rmk}

One application of Theorem \ref{main} is a proof of Conjecture \ref{whynot} for the completed local ring at the vertex of a cone over certain projective varieties, even though the class groups of these local rings tend to be infinitely generated. 

\begin{cor}\label{cone}
Conjecture \ref{whynot} holds for the completed local ring at the vertex $p$ of the 
cone $V$ over a smooth variety $X \subset \mathbb P^n$ satisfying 
$H^1(\O_X (k))=H^2(\O_X (k))=0$ for $k > 0$. 
\end{cor}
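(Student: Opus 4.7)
The plan is to reduce Corollary \ref{cone} to Theorem \ref{main} by realizing the affine cone as an open subset of a projective complete intersection. Since Conjecture \ref{whynot} is only nonempty when $R$ is a complete intersection, we implicitly assume $X \subset \mathbb P^n$ is itself a complete intersection cut out by forms $F_1,\dots,F_c$. The projective cone $\bar V \subset \mathbb P^{n+1}$ defined by the same forms (viewed as homogeneous polynomials in $\mathbb C[x_0,\dots,x_{n+1}]$) is then a complete intersection of the same codimension, and the vertex $p=[0{:}\cdots{:}0{:}1]$ satisfies $\O_{\bar V,p}=\O_{V,p}$, which is a normal complete intersection local ring (smooth complete intersections in projective space are projectively normal). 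When $\dim V=2$ the hypothesis of Theorem \ref{main} requires $n+1=3$, so we must take $X\subset\mathbb P^2$ throughout that case.

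The main intermediate step is to show that, under the cohomology hypotheses, the Mori map $\Cl \O_{V,p}\hookrightarrow \Cl R$ is an isomorphism, so that any finitely generated $G\subset \Cl R$ automatically lies in $\Cl \O_{V,p}$. Let $\pi\colon \tilde V\to V$ be the blow-up of the vertex, with exceptional divisor $E\cong X$; then $\tilde V$ is the total space of $\O_X(-1)$, and excision together with $\Pic\tilde V=\pi^*\Pic X$ gives
$$\Cl \O_{V,p}=\Pic(V\setminus\{p\})=\Pic(\tilde V\setminus E)=\Pic X/\langle \O_X(1)\rangle.$$
On the formal side $\Cl R=\Pic(\Spec R\setminus\{p\})=\Pic(\hat{\tilde V}_E\setminus E)$, where $\hat{\tilde V}_E$ is the formal completion of $\tilde V$ along $E$. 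Writing $U_m$ for the $m$th infinitesimal thickening of $E$ in $\tilde V$, one has $\Pic \hat{\tilde V}_E=\varprojlim \Pic U_m$, and the exponential sequence shows that the kernel and cokernel of $\Pic U_{m+1}\to \Pic U_m$ are controlled respectively by $H^1(X,\O_X(m+1))$ and $H^2(X,\O_X(m+1))$. Our hypotheses make these vanish for $m\ge 0$, so the inverse system stabilizes at $\Pic X$, and after further quotienting by $[E]\leftrightarrow \O_X(-1)$ we obtain $\Cl R=\Pic X/\langle \O_X(1)\rangle$ compatibly with the algebraic identification; thus the Mori map is an isomorphism.

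Granting this identification, Theorem \ref{main} applied to $p\in \bar V\subset \mathbb P^{n+1}$ with the finitely generated subgroup $G\subset \Cl R=\Cl \O_{\bar V,p}$ immediately produces a geometric local ring $B$ with $\widehat B=R$ and $\Cl B=G$, as required. The principal obstacle is the formal Picard computation of the second paragraph: although morally standard, one must carefully track the restriction maps through both the algebraic excision sequence on $\tilde V$ and the formal-function limit on $\hat{\tilde V}_E$ so that the two identifications with $\Pic X/\langle \O_X(1)\rangle$ are intertwined by the Mori map, via a careful appeal to formal function theory in the spirit of \cite[Exp. XI]{SGA2}.
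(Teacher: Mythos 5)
Your proposal is correct and follows essentially the same route as the paper: the paper's proof consists of Proposition \ref{background} (the blow-up of the vertex, $\Pic \tilde Z \cong \varprojlim \Pic E_k$, and the cohomological vanishing forcing the tower $\Pic E_{k+1}\to\Pic E_k$ to stabilize at $\Pic E$, so the Mori map is an isomorphism onto $\Pic X/\langle\O_X(1)\rangle$) followed by an application of Theorem \ref{fgsubgp}, which is exactly your two steps. Your explicit remark that the statement is vacuous unless the vertex is a complete intersection point (forcing $X$ to be arithmetically a complete intersection) and the restriction to $X\subset\mathbb P^2$ in the surface case are both consistent with what the paper leaves implicit or notes in the surrounding text.
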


As in Remark \ref{condition}, we must take $n=2$ when $X$ is a curve. 
Thus examples of $X$ in Corollary \ref{cone} include plane curves 
of degree $d \leq 3$, surfaces $X \subset \mathbb P^3$ of degree $d \leq 4$ and 
Arithmetically Cohen-Macaulay subvarieties $X \subset \mathbb P^{n}$ of dimension at least $2$. 

Claborn showed in the 1960s that every abelian group is the class group of a Dedekind domain \cite{claborn}. A recent refinement of Clark shows that the Dedekind domain may be taken as the integral closure of a PID in a 
quadratic extension field \cite{clark}. 
Results like these are impossible for geometric Dedekind domains because they have trivial class group, but we are led to ask:

\begin{ques}{\em Which abstract groups are class groups of normal geometric rings? \em}\end{ques} 

Not all infinitely generated groups are possible by Remark \ref{notC}, 
but Example \ref{infinite} suggests that finite extensions of 
quotients of Abelian varieties by finite subgroups are candidates. 
For example, the quotient of the Picard group of a smooth non-rational 
plane curve $C$ by $\langle \O_C (1) \rangle$ arises in this way 
(see Proposition \ref{background} (a)). We apply Theorem \ref{main} to 
show that every finitely generated abelian group is possible: 

\begin{thm}
Let $G$ be any finitely generated abelian group. 
Then there is a point $p$ on a normal surface $S \subset \mathbb P^3$ for which 
$G \cong \Cl \O_{S,p} \subset \Cl \widehat {\O_{S,p}}$.  
\end{thm}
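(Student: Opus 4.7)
\emph{Proof plan.} The strategy is to produce a concrete normal surface $S \subset \mathbb P^3$ with a singular point $p$ whose local class group $\Cl \O_{S,p}$ contains a copy of $G$, and then invoke Theorem \ref{main} to replace $\O_{S,p}$ by a geometric local ring $B$ having the same completion but with class group exactly $G$.

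Write $G \cong \mathbb Z^r \oplus T$ with $T$ a finite abelian group, and let $s$ be the number of invariant factors of $T$. First I choose a smooth plane curve $C \subset \mathbb P^2$ of degree $d$ large enough that its genus $g = (d-1)(d-2)/2$ satisfies $2g \geq s$; any sufficiently large $d$ will do. Let $S \subset \mathbb P^3$ be the projective cone over $C$, with vertex $p$. Then $S$ is a normal surface with an isolated hypersurface (hence complete intersection) singularity at $p$, so $(p,S)$ meets the hypotheses of Theorem \ref{main} with $n = 3$.

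By the cone formula alluded to in Proposition \ref{background}(a), $\Cl \O_{S,p} \cong \Pic C / \langle \O_C(1) \rangle$. Since $\deg \O_C(1) = d > 0$, the Jacobian $J(C) := \Pic^0 C$ injects into this quotient, so $J(C) \subseteq \Cl \O_{S,p}$. As an abstract abelian group $J(C)(\mathbb C) \cong \mathbb C^g / \Lambda$ for a rank-$2g$ lattice $\Lambda$, so its $N$-torsion is $(\mathbb Z/N)^{2g}$ for every $N$ and its torsion-free quotient is a $\mathbb Q$-vector space of uncountable dimension. The inequality $2g \geq s$ lets me embed $T$ into the torsion subgroup of $J(C)$, and choosing $r$ additional elements of infinite order that are $\mathbb Q$-linearly independent modulo torsion gives an embedding $G \hookrightarrow J(C) \subseteq \Cl \O_{S,p}$. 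Applying Theorem \ref{main} to this finitely generated $G \subset \Cl \O_{S,p}$ produces a geometric local ring $B$ with $\widehat B = \widehat \O_{S,p}$ and $\Cl B = G$; since Theorem \ref{main} realizes $B$ as the local ring at a point of a normal surface $S' \subset \mathbb P^3$, the result follows.

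The step I expect to be most delicate is ensuring that the cone identification gives the class group of $\O_{S,p}$ itself rather than only of its completion, since Theorem \ref{main} takes its input subgroup inside the ordinary class group; this should follow from the fact that $C \subset \mathbb P^2$ is arithmetically Cohen-Macaulay and is presumably the content of Proposition \ref{background}(a). Everything else is routine given Theorem \ref{main}: the group-theoretic embedding into the Jacobian is easy because a positive-dimensional complex torus is abstractly rich enough to accommodate any finitely generated abelian group with few enough cyclic factors, and enlarging $d$ freely supplies as many as needed.
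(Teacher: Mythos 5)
Your proposal is correct and follows essentially the same route as the paper: take the cone over a smooth plane curve of large degree, identify $\Cl \O_{S,p} \cong \Pic C/\langle \O_C(1)\rangle$ (this is Proposition \ref{background}(b), not (a)), embed $G$ into $J(C) = \Pic^0 C$ using that a $g$-dimensional complex torus is abstractly $(\R/\Z)^{2g}$, and then apply Theorem \ref{main} (i.e.\ Theorem \ref{fgsubgp}) to cut the class group down to exactly $G$. The only cosmetic difference is that you require $2g \geq s$ and put all of $\Z^r$ into the $\Q$-vector-space part, whereas the paper asks for $2g \geq r+s$ and uses one circle factor per generator; both work.
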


One of our goals here is to publicize these interesting questions. 
The main result is Theorem \ref{main} and the applications above. 
In the long run we hope that this theorem will pave the way for a 
proof of Conjecture \ref{whynot}. In section 2 we prove the theorem and 
apply it to rational double point singularities. 
Section 3 addresses vertex singularities on cones. In Section 4 we compute 
the completed class group of the singularity at $p$ of a general surface 
containing $r$ general lines passing through $p$. 

\section{Finitely generated subgroups of local class groups}

In this section we prove Theorem \ref{main} from the introduction, which 
follows immediately from Lemma \ref{intrep} and Theorem \ref{fgsubgp} below. 
We note that Conjecture \ref{3foldconj} would remove the one technical 
assumption in Theorem \ref{fgsubgp} and recover in Corollary \ref{doublepoint} 
the main theorem of \cite{BN4} with a very short proof. 

\begin{lem}\label{intrep} Let $p\in V\subset\mathbb P^n$ be a normal singularity, and let $g\in \Cl \O_{V,p}$. Then there exists an integral divisor on $V$ whose class in $\O_{V,p}$ is $g$.
\end{lem}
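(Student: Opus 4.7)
The plan is to lift $g$ to a Weil divisor class on all of $V$, twist by a large multiple of the hyperplane class to secure a sufficiently rich linear system, and then apply Bertini to extract an integral representative.

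First, choose any Weil divisor on $\Spec \O_{V,p}$ representing $g$ and take Zariski closures of its prime components to obtain a Weil divisor $D_0$ on $V$ whose image in $\Cl \O_{V,p}$ is $g$. If $H$ denotes a hyperplane section of $V \subset \P^n$, then $H$ is Cartier on $V$, hence locally principal at $p$, and so maps to $0$ in $\Cl \O_{V,p}$; consequently $[D_0 + mH] = g$ in $\Cl \O_{V,p}$ for every $m \ge 0$.

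For $m \gg 0$, Serre's theorem applied to the rank-$1$ reflexive sheaf $\O_V(D_0)$ guarantees that $\O_V(D_0 + mH)$ has abundant global sections and is generated by them outside a closed subset $Z$ of codimension $\ge 2$ (which absorbs the singular locus of $V$ together with the point $p$, where $\O_V(D_0)$ may fail to be invertible). The associated rational map $\phi \colon V \dashrightarrow \P^N$ is regular on $V \setminus Z$, and for $m$ large its image has dimension $\dim V \ge 2$. Applying the classical Bertini irreducibility theorem on $V \setminus Z$ to the pullback of a general hyperplane in $\P^N$ produces a prime divisor $D_U$ on $V \setminus Z$. Its Zariski closure $D$ in $V$ is then a prime Weil divisor on $V$: since no codimension-$1$ component of $D$ can be contained in the codimension-$\ge 2$ set $Z$, the integrality of $D$ is already detected on $V \setminus Z$. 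Finally $[D] = [D_0 + mH] = g$ in $\Cl \O_{V,p}$, as required.

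The principal obstacle is the Bertini step. Because $\O_V(D_0)$ is only reflexive (not invertible) at $p$, the point $p$ must in fact lie in the base locus of $|D_0 + mH|$ whenever $g \ne 0$: any divisor avoiding $p$ has trivial class in $\Cl \O_{V,p}$, so any $E \sim D_0 + mH$ with $[E] = g \ne 0$ is forced to pass through $p$. This precludes a direct appeal to classical very-ample Bertini on $V$, and the argument goes through only because the base locus has codimension $\ge 2$, so that smooth-locus Bertini suffices to force $D$ to be integral globally on $V$.
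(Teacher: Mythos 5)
Your proof is correct, and it reaches the conclusion by a genuinely more direct route than the paper's. Both arguments share the same skeleton --- lift $g$ to a Weil divisor on $V$, twist by $mH$ (which dies in $\Cl \O_{V,p}$), invoke Serre-type global generation, and apply Jouanolou's Bertini theorem to a morphism defined off a small base locus --- but they differ in how the relevant linear system is produced. You work with the complete linear system of the rank-one reflexive sheaf $\O_V(D_0+mH)$ itself, observing that it is base-point free wherever it is invertible, i.e.\ off a codimension-$\geq 2$ set $Z$, so that Bertini on $V\setminus Z$ makes the general member integral outright; you also correctly isolate the one genuine issue, namely that the base locus must contain $p$ whenever $g\neq 0$ but is small enough not to obstruct integrality. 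The paper instead avoids the non-invertible sheaf $\O_V(C)$ entirely and performs a double residuation with honest ideal sheaves: a general $s\in H^0(\I_C(n))$ writes $nH=C+E$, and then the general member of $|\I_E(m+1)|$ is $E\cup D$ with $D$ integral by Bertini applied to the map $V\setminus E\to \P^N$; since $D\sim C$ modulo $H$, this again represents $g$. Your version saves one Serre/linkage step at the cost of needing the standard formalism of divisorial reflexive sheaves on normal varieties (that $\O_V(D_0)\otimes\O(m)\cong\O_V(D_0+mH)$, that its nonzero sections correspond to effective divisors in the class, and that it is invertible in codimension one). Two points to tighten, both at the same level of detail the paper itself supplies: (i) you need the general member to be \emph{integral}, not merely irreducible, so either cite the version of Jouanolou giving integrality or add a word on generic reducedness in characteristic zero; and (ii) justify in one line that $\phi$ has image of dimension $\dim V$, e.g.\ because $\O_V(D_0+(m-1)H)$ is already globally generated and $H$ is very ample, so the system separates points on $V\setminus Z$.
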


\begin{proof}
The element $g$ lifts to a Weil divisor $C \in \Cl V$ because height 
one primes in the local 
ring corresponds to codimension one subvarieties of $V$. 
Replacing $C$ with $C + n H$ we may assume that $C$ is 
effective and we claim that $C$ may be taken integral. 
To see this, choose $n > 0$ so that $\I_C (n)$ is generated by 
global sections. Since $\I_C$ is generically a principal ideal 
along $C$, the general section $s \in H^0(\I_C (n))$ generates 
$\I_C$ off a set of codimension two. Therefore the hypersurface $s=0$ 
on $V$ consists of $C$ along with another divisor $E$ meeting $C$ 
properly and $C + E = nH$ on $V$. 
Choosing $m > n$ so that $\I_E (m)$ is generated by global sections, 
the linear system $H^0(\I_E (m+1))$ separates points and 
tangent vectors away from the base locus $E$ and the general 
member consists of a proper union $E \cup D$. The
restricted linear system separates points and tangent vectors away 
from $E$, hence gives a map $V-{E} \to \mathbb P^N$ whose image has 
dimension $\dim V \geq 2$ and we may apply Bertini's theorem 
\cite[Thm. 6.3]{jouanolou} to see that $D$ is integral. 
Finally 
\[
D \cong mH - E \cong mH - (nH - C) \cong (m-n)H+C
\] 
so $D$ is linearly equivalent to $C$ modulo the hyperplane 
class $H$ (which is trivial in $\Cl \O_{V,p}$), hence equivalent to $g$ in $\Cl\O_{V,p}$.
\end{proof}

\begin{thm}\label{fgsubgp} 
Let $V \subset \mathbb P^n$ be a variety with normal local complete 
intersection point $p \in V$, where $n=3$ if $\dim V=2$. 
Given prime divisors $C_i \subset V$, there is a closed subscheme 
$B \subset \mathbb P^n$ containing the $C_i$ such that the general complete 
intersection $W$ of dimension $\dim V$ containing $B$ is analytically isomorphic 
to $V$ at $p$, the isomorphism identifying $\Cl \O_{W,p} \subset \Cl \widehat \O_{W,p}$ 
with the subgroup generated by the $C_i$. 
\end{thm}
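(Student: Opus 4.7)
The plan is to choose a base locus $B \subset \mathbb P^n$ that records both the $C_i$ and enough infinitesimal information about $V$ at $p$, so that a general complete intersection $W$ of dimension $\dim V$ containing $B$ automatically has $\widehat\O_{W,p}\cong\widehat\O_{V,p}$, and then to apply the Noether--Lefschetz theorem with prescribed base locus from \cite{BN,BN5} to pin down $\Cl\O_{W,p}$. Concretely, I would set $B = Z\cup\bigcup_i C_i$, where $Z = V \cap p^{(N)}$ is the scheme-theoretic intersection of $V$ with the $N$-th infinitesimal neighborhood of $p$ in $\mathbb P^n$ (so $Z$ is zero-dimensional, supported at $p$) and $N$ is to be chosen large. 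For sufficiently large degrees $d_1,\dots,d_r$ with $r = n - \dim V$, the twisted ideals $\I_B(d_j)$ are globally generated and separate points and tangent vectors off $B$, so by Bertini a general complete intersection $W = H_1\cap\cdots\cap H_r$ with $H_j\supset B$ and $\deg H_j = d_j$ is irreducible of dimension $\dim V$, contains every $C_i$, and is smooth off $B$.

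For the analytic isomorphism, each $H_j$ contains $Z$, so a local equation $g_j$ of $H_j$ at $p$ agrees with some local equation $f_j$ of $V$ modulo $\fm_p^{N+1}$ in $\widehat\O_{\mathbb P^n,p}$. Because $V$ is lci at $p$, the $f_j$ form a regular sequence; an Artin approximation / Tougeron rigidity argument then produces, for $N$ sufficiently large, an automorphism $\Phi$ of $\widehat\O_{\mathbb P^n,p}$ carrying $(f_1,\dots,f_r)$ to $(g_1,\dots,g_r)$, which one arranges to fix the ideals of the $C_i$ since these lie inside the base locus $B$. This yields $\widehat\O_{W,p}\cong\widehat\O_{V,p}$ via $\Phi$, with $C_i\subset V$ corresponding to $C_i\subset W$ under the induced isomorphism of completed class groups.

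The heart of the proof is the computation of $\Cl\O_{W,p}$, and this is also the main obstacle. Here I would invoke the Noether--Lefschetz theorem with prescribed base locus from \cite{BN} (for surfaces in $\mathbb P^3$, which is why the hypothesis forces $n=3$ when $\dim V = 2$) and its higher-dimensional analogue from \cite{BN5}: for a very general complete intersection $W$ of the chosen multidegree through $B$, every Weil divisor class on $W$ is an integral combination of the classes of components of $B$ together with classes pulled back from $\Pic\mathbb P^n$. Passing to the local ring at $p$ trivializes ambient classes (the hyperplane class is locally principal) and kills any component of $B$ not meeting $p$; in particular $Z$ contributes nothing since it carries no Weil divisor of $W$, leaving $\Cl\O_{W,p}$ generated precisely by the $C_i$ meeting $p$ (while $C_i$ avoiding $p$ contribute the zero class, which is harmless). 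Transporting via $\Phi$ identifies $\Cl\O_{W,p}\subset \Cl\widehat\O_{W,p}$ with the subgroup of $\Cl\widehat\O_{V,p}$ generated by the $C_i$, as required. The delicate step is verifying that the Noether--Lefschetz theorem in our toolkit applies verbatim to the ``mixed'' base locus $B$ (a fat point at $p$ together with the positive-dimensional $C_i$) and introduces no spurious transcendental classes, which is exactly the issue flagged in Remark \ref{condition}.
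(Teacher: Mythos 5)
Your overall architecture (a base locus recording the $C_i$ plus infinitesimal data at the singularity, a finite-determinacy argument for the analytic isomorphism, and Noether--Lefschetz with base locus for the class group) matches the paper's, and the Noether--Lefschetz step is handled the same way. But there is a genuine gap at exactly the step you propose to ``arrange'': producing an automorphism $\Phi$ of $\widehat\O_{\mathbb P^n,p}$ carrying $(f_j)$ to $(g_j)$ that \emph{also stabilizes the completed ideals of the $C_i$}. Having the $C_i$ inside the base locus only guarantees $C_i\subset W$; it says nothing about what the abstractly constructed $\Phi$ does to the minimal primes $\widehat Q_{i,j}$ of $\widehat I_{C_i}$. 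A generic Tougeron/Artin approximation from $g_j\equiv f_j \pmod{\fm_p^{N+1}}$ yields a coordinate change $x_i\mapsto x_i+h_i$ with $h_i$ merely in a high power of $\fm_p$, and such a $\Phi$ can move the $\widehat Q_{i,j}$ to other height-one primes, so the image of $\Cl\O_{W,p}$ in $\Cl\widehat\O_{V,p}$ need not be the subgroup generated by the $C_i$. This is precisely the ``hard part'' the paper isolates. Its fix is built into the base locus itself: $B$ is defined by $(\bigcap I_{C_i}\cdot J^2)+I_V$ with $J$ the Jacobian ideal, so the perturbation $T$ lies in $J^2 I_C$ and the extension of Ruiz's lemma \cite[Prop. 4.1]{BN5} produces $h_i\in J\,I_C$. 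Then for any $f\in\widehat Q_{i,j}$ one has $f(x+h)-f(x)\in(h_1,\dots,h_n)\subseteq I_C\,\C[[x]]\subseteq\widehat Q_{i,j}$, so each prime is stabilized and each class lands where it should. Your fat point $Z=V\cap p^{(N)}$ does not intertwine the infinitesimal data with the ideals $I_{C_i}$, so this mechanism is unavailable to you.

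A secondary problem: the hypothesis is only that $p$ is a normal local complete intersection point, so for $\dim V\geq 3$ the singular locus of $V$ through $p$ may be positive-dimensional. A zero-dimensional fat point at $p$ then cannot force the general $W$ through $B$ to be analytically isomorphic to $V$ at $p$: the general such $W$ is smooth at the nearby points of $\Sing V$, hence $\widehat\O_{W,p}\not\cong\widehat\O_{V,p}$. The paper's multiplicity structure is supported on all of $\Sing V$ (cut out by $J^2$) precisely to avoid this. The remaining bookkeeping in the paper --- using excellence to decompose $\widehat Q_i$ into primes $\widehat Q_{i,j}$ of the correct height and checking via Samuel's formula that $C_i$ maps to $\sum_j\widehat P_{i,j}$ with multiplicity one --- is absent from your sketch but is routine by comparison.
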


\begin{rmk}{\em
The base locus $B$ in Theorem \ref{fgsubgp} can be taken to be the union of the 
$C_i$ and a multiplicity structure supported on $\Sing V$. The existence of the 
analytic isomorphism $\psi: \widehat \O_{V,p} \to \widehat \O_{W,p}$ is given 
by our extension of Ruiz' lemma \cite[Prop. 4.1]{BN5} and the classes $C_i$ 
generate $\Cl \O_{W,p}$ by \cite[Cor. 1.5]{BN5}. 
The hard part is showing that the abstractly constructed isomorphism $\psi$ 
takes the classes $C_i$ to their expected destinations in $\Cl \widehat \O_{W,p}$.
\em}\end{rmk}

\begin{proof} 

By Lemma~\ref{intrep}, each $g_i \in G$ is represented by an integral 
divisor $C_i \subset V$ corresponding to a height one 
prime $P_i \subset \O_{V,p}$ via the ideal sheaf $\I_{C_i,V}$ 
and hence to a prime $Q_i \subset \O_{\mathbb P^n,p}$ of height $n-d+1$, 
where $d = \dim V$. 
Denote by $\widehat P_i$ and $\widehat Q_i$ the ideals generated in 
their respective completions. 

\[
\begin{array}{ccc}
Q_i & \subset & \O_{\mathbb P^n, p} \\ 
\downarrow & & \downarrow \\ 
P_i & \subset & \O_{V,p}
\end{array} 
\;\;\;\;\;\;\;\;
\begin{array}{ccc}
\widehat Q_i & \subset & \widehat \O_{\mathbb P^n, p} \\ 
\downarrow & & \downarrow \\ 
\widehat P_i & \subset & \widehat \O_{V,p}
\end{array} 
\]

Recall that a ring $A$ is {\it excellent} \cite[$\S$ 13]{M1} if it is 
Noetherian, universally catenary, a G-ring (the completion map
$A_p \to \widehat A_p$ is regular for all $p \in \Spec A$) and the regular 
locus of any finitely generated $A$-algebra $B$ is open in $\Spec B$. 
This class of rings includes complete Noetherian local rings and localizations of 
quotients of regular local rings. Thus the quotients 
$\O_{\P^n,p}/Q_i$ are excellent and hence their completions 
$\widehat{\O_{\P^n,p}/Q_i} \cong \widehat \O_{\P^n,p} / \widehat Q_i$ retain the 
property of being reduced \cite[Thm. 32.2]{M2}, therefore 
$\widehat Q_i$ can be written as an intersection of prime ideals 
$\bigcap \widehat Q_{i,j}$. Furthermore, normality of $\O_{\P^n,p}$ implies 
that of $\widehat \O_{\P^n,p}$ \cite[Thm. 32.2]{M2}, so both rings are 
equidimensional and catenary as well, since both are excellent. 
It follows \cite[Thm. 31.5]{M2} that $\widehat \O_{\P^n,p} / \widehat Q_i$ 
is equidimensional so that each $\widehat Q_{i,j}$ has height $n-d+1$ (like $Q_i$ itself). 
Likewise for the quotient rings $\O_{V,p}$ and $\widehat \O_{V,p}$ we may write 
$\widehat P_i = \bigcap \widehat P_{i,j}$ with $\widehat P_{i,j}$ (the images of the 
$\widehat Q_{i,j}$) of height one. By the uniqueness of this irredundant primary 
decomposition \cite[Thm. 18.24]{AK},  one sees that $\widehat P_i$ contains 
elements in $\widehat P_{i,j} \setminus \widehat P_{i,j}^2$,
so the valuation $v_{i,j}$ corresponding to $\widehat P_{i,j}$ satsifies $v_{i,j} (\widehat P_i) = 1$. 
Therefore in Samuel's formula \cite{samuel} for the injection 
$j_V:\Cl \O_{V,p} \to \Cl \widehat \O_{V,p}$, we have $j_V (\widehat P_i) = \sum_j \widehat P_{i,j}$, and 
$j_V (G) =  \langle \sum_j \widehat P_{1,j}, \sum_j \widehat P_{2,j}, \dots, 
\sum_j \widehat P_{r,j} \rangle$. 

Since $p \in V$ is a normal local complete intersection, $V$ is locally defined 
by an ideal $I_V = (F_1, F_2, \dots, F_{n-d})$ and the singular locus is defined by 
$I_V+J$, where $J$ is the Jacobian ideal generated by partial 
derivatives of the $F_i$, and $(F_i)+J$ has height $\geq n-d+2$ by normality. 
Now let $Z$ be the subscheme of $\mathbb P^n$ defined by the ideal 
$(\bigcap I_{C_i} \cdot J_F^2) + I_V$. Scheme-theoretically, $Z$ the union of the 
$C_i$ of dimension $d-1$ along with a multiplicity structure on the singular 
locus of dimension $\leq d-2$.

The general complete intersection $W$ of dimension $d$ containing $Z$ is locally 
defined by $F^\prime_i=F_i+T$, where $T \in J^2 I_C$. 
By \cite[Prop. 4.1]{BN5}, $W$ is analytically 
isomorphic to $V$ at $p$; 
that is, there is an automorphism $\alpha$ of $\C[[x_1, x_2, \dots x_n]]$ given 
by the change of variables $x_i\mapsto x_i+h_i, h_i\in J I_C \C[[x_1,x_2, \dots x_n]], 
1 \leq i=1 \leq n$, such that $\alpha(F_i)=F^\prime_i$, that is 
$F^\prime_i (x_1,x_2, \dots x_n) = F_i(x_1+h_1, x_2+h_2, \dots x_n+h_n)$. 
Moreover, for $f \in \widehat Q_{i,j}$, the difference  
\[
f(x_1+h_1, x_2+h_2, \dots x_n+h_n) - f(x_1, x_2, \dots x_n)
\]
clearly lies in 
$(h_1, h_2, \dots h_n) \subseteq I_C \C[[x_1,x_2, \dots x_n]] \subseteq \widehat Q_{i,j}$,
so $\alpha(\widehat Q_{i,j}) \subseteq \widehat Q_{i,j}$ and 
therefore $\alpha(\widehat Q_{i,j}) = \widehat Q_{i,j}$ since 
each $\widehat Q_{i,j}$ is prime. 

From the preceding paragraph we see that $\alpha$ induces an isomorphism 
\[
\bar \alpha: \widehat \O_{V,p} = \C[[x_1, x_2, x_3]]/(F_i) \to \C[[x_1,x_2,x_3]]/(F^\prime_i) = 
\widehat \O_{W,p}
\] 
satisfying $\bar\alpha(\widehat P_{i,j}) = \widehat P_{W,i,j}$, 
where the subscript $W$ denotes passing to the quotient in $\O_{W,p}$ 
(or $\widehat \O_{W,p}$ in the sequel).

By our Noether-Lefschetz theorem \cite[Cor. 1.6]{BN5} (\cite[Thm. 1.1]{BN} if 
$d=2$ and $n=3$), the class group $\Cl W$ is generated by the $C_{i}$, 
and therefore the class group $\Cl \O_{W,p}$ is generated by the images of the primes 
$\widehat P_{i}$. Above we showed that these have image $\sum \widehat P_{i,j}$ in the 
completed local ring and that $\bar \alpha (\widehat P_{i,j}) = \widehat P_{W,i,j}$ and 
therefore $\bar \alpha (\sum_j \widehat P_{i,j}) = \sum_k \widehat P_{W,i,j}$. 
Putting this all together we have 
\[
\bar \alpha \circ j_V (G) =  \langle \sum_j \widehat P_{W,1,j}, \sum_j \widehat P_{W,2,j}, \dots , \sum_j \widehat P_{W,r,j} \rangle = j_W (\Cl \O_{W,p}),
\]
where $j_W$ is the natural injection $\Cl \O_{W,p} \to \Cl \widehat \O_{W,p}$, so we have constructed the local ring $\O_{W,p}$ having the desired property. 
\end{proof}

\begin{rmk}{\em Although stated for local rings at normal points on local 
complete intersection varieties, Theorem \ref{fgsubgp} has a purely 
algebraic formulation: Let $A$ be the localization of a complete intersection 
ring of finite type over $\mathbb C$ at a maximal ideal with completion 
$R = \widehat A$ and let $G \subset \Cl B$ be any finitely generated subgroup. 
Then there is a ring $B$ with $\widehat B \cong R$ such that the subgroup 
$\Cl B \hookrightarrow \Cl R$ is exactly the subgroup $G \subset \Cl A \subset \Cl R$. 
\em}\end{rmk}

We hope to remove the restriction that $n=3$ when $d=2$ of Theorem \ref{fgsubgp} 
in the future. For this it suffices to prove 
Conjecture \ref{3foldconj} below \cite[Rmk. 1.4]{BN5}. 
The conjecture holds for $X = \mathbb P^3$ \cite{BN} 
and the analogous statement holds when $\dim X \geq 4$ \cite{BN5}. 

\begin{conj}\label{3foldconj}
Let $X \subset \mathbb P^n$ be a normal threefold containing a closed subscheme 
$Z$ of dimension $\leq 1$ lying on a normal surface $S \subset X$ with curve components $Z_i$. 
Then the very general $Y \in |H^0(X,\I_Z (d))|$ for $d \gg 0$ is normal and the 
natural homomorphisms $\Cl X \to \Cl Y$ and $\bigoplus \mathbb Z \stackrel{\supp Z_i}{\to} \Cl Y$ 
induce an isomorphism $\alpha: \bigoplus \mathbb Z \oplus \Cl X \to \Cl Y$.
\end{conj}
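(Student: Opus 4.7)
The plan is to reduce to a Noether-Lefschetz theorem on a smooth ambient threefold via a log resolution, then descend using the standard dictionary between the Picard group of a resolution and the divisor class group of a normal variety. Let $\pi:\widetilde X\to X$ be a log resolution of the pair $(X,Z\cup S)$, with exceptional divisors $E_j$ and smooth strict transforms $\widetilde Z_i$ such that $\sum\widetilde Z_i+\sum E_j$ is simple normal crossings. Then $\Pic\widetilde X\to\Cl X$ is surjective with kernel generated by the $E_j$, and once I know the very general $Y$ is normal, the analogous surjection $\Pic\widetilde Y\to\Cl Y$ (where $\widetilde Y$ is the strict transform) will let me translate a computation of $\Pic\widetilde Y$ into one of $\Cl Y$.

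Normality of the very general $Y\in|H^0(X,\I_Z(d))|$ for $d\gg 0$ I would handle in the Bertini/base-locus style used in Lemma~\ref{intrep}: off $Z$ the system is base-point free on the smooth locus of $X$, so Bertini yields smoothness there, while along $Z$ the crucial input is that $Z$ lies on the normal surface $S$, so for general $Y$ the intersection $Y\cap S$ decomposes as $Z$ plus a mobile divisor meeting $Z$ properly, which gives normality of $Y$ in codimension one along $Z$; Serre's $S_2$ holds for $Y$ as a Cartier divisor on the smooth Cohen-Macaulay locus of $X$. The strict transform $\widetilde Y\subset\widetilde X$ is then a smooth projective surface by Bertini on the smooth ambient, and $\widetilde Y\to Y$ is a proper birational model.

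The heart of the argument is an infinitesimal Noether-Lefschetz theorem on $\widetilde X$ showing that for very general $\widetilde Y$, $\Pic\widetilde Y$ is generated by restrictions from $\Pic\widetilde X$, the classes of the $\widetilde Z_i$ meeting $\widetilde Y$, and the classes of exceptional components meeting $\widetilde Y$. I would set this up by forming the universal family over an open $U\subset\mathbb P H^0(\widetilde X,\mathcal L)$ with $\mathcal L$ the pullback of $\I_Z(d)$, and studying the differential of the local period map on $R^2(\mathrm{pr}_2)_*\mathbb Q$; for $d\gg 0$, Kodaira-type vanishing on $\widetilde X$ plus surjectivity of the relevant Gaussian/Jacobi-ring multiplication maps should force each Hodge locus outside the tautological ones to have positive codimension in $U$, so that a countable union of proper subvarieties of $U$ carries all the extra classes. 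This is the Green-Voisin strategy used for $X=\mathbb P^3$ in \cite{BN} and for ambient dimension $\geq 4$ in \cite{BN5}, adapted to a general smooth threefold. Descending through $\widetilde Y\to Y$ and $\widetilde X\to X$ yields surjectivity of $\alpha$, and injectivity is essentially automatic: the $\supp Z_i$ have codimension $2$ in $X$, so no nontrivial $\mathbb Z$-combination of them on $Y$ can be realized as the restriction of a Weil divisor from $X$, while $\Cl X\hookrightarrow\Cl Y$ for $d\gg 0$ by standard Lefschetz restriction on the normal variety $X$.

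The hard part will be the infinitesimal Noether-Lefschetz step on the arbitrary smooth threefold $\widetilde X$: Carlson-Green-Griffiths-Harris-style arguments underlying the $\mathbb P^3$ case rely on Koszul and Jacobi-ring surjectivity together with effective vanishing, both of which must be re-established when $\widetilde X$ is not $\mathbb P^3$ and when the linear system is twisted by $\pi^*\I_Z$. Quantifying an effective threshold on $d$ so that neither the exceptional locus nor the base ideal destroys the required surjectivity — and hence ensures that the non-tautological Hodge loci are proper subvarieties of $U$ — is the essential technical hurdle, and is presumably what keeps Conjecture~\ref{3foldconj} open in the excerpt.
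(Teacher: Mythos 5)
This statement is labeled as a \emph{conjecture} in the paper, and the authors offer no proof of it: they state explicitly that it is known only for $X=\mathbb P^3$ (by \cite{BN}) and that the analogous statement holds in ambient dimension $\geq 4$ (by \cite{BN5}), while the general normal-threefold case is what they ``hope to remove'' in future work. So there is no paper proof to compare against, and your text should be judged as a research program rather than a proof. As a program it is reasonable and matches the authors' own intended route (log-resolve, prove an infinitesimal Noether--Lefschetz theorem with base locus on the resolved ambient space, descend to class groups), but it does not constitute a proof: the entire mathematical content lives in the step you phrase as ``Kodaira-type vanishing plus surjectivity of the relevant Gaussian/Jacobi-ring multiplication maps \emph{should} force each Hodge locus outside the tautological ones to have positive codimension.'' That surjectivity is exactly what is established by explicit Jacobi-ring computations when $\widetilde X=\mathbb P^3$ and by different (Grothendieck--Lefschetz) methods when $\dim X\geq 4$, and neither argument is known to transfer to an arbitrary smooth threefold with a linear system twisted by $\pi^*\I_Z$. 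You correctly identify this as the hurdle in your last paragraph, which amounts to conceding that the proof is not complete.

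Two further points deserve care even within the program. First, your injectivity argument is too quick: the claim that ``no nontrivial $\mathbb Z$-combination of the $Z_i$ on $Y$ can be realized as the restriction of a Weil divisor from $X$'' does not follow from the codimension of $\supp Z_i$ in $X$, since linear equivalence on $Y$ is insensitive to how a divisor class was produced; freeness of the $Z_i$ modulo restrictions from $\Cl X$ is part of the content of the Noether--Lefschetz statement for very general $Y$ (it is where ``very general'' is used a second time, via a countability argument over the relations), not an automatic consequence. Second, the descent from $\Pic\widetilde Y$ to $\Cl Y$ requires tracking which exceptional classes of the log resolution meet $\widetilde Y$ and verifying that killing them produces exactly $\bigoplus\mathbb Z\oplus\Cl X$ and not a proper quotient; this bookkeeping is routine but must be done, especially since $X$ is only assumed normal and $\Cl X$ need not be finitely generated.
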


As an application of Theorem \ref{fgsubgp} we recover our earlier result \cite{BN4} with a short proof:  

\begin{cor}\label{doublepoint}
If $(R,\fm)$ is a complete 2-dimensional rational double point singularity 
and $G \subset \Cl A$ is a subgroup, then there exists a geometric normal domain 
$(B,\fn)$ such that $\widehat B \cong R$ and $H = \Cl B \subset \Cl R$. 
\end{cor}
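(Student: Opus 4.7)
The plan is to reduce to Theorem~\ref{main}. A 2-dimensional rational double point is a hypersurface singularity defined by one of the standard $\rdA_n$, $\rdD_n$, or $\rdE_n$ polynomials in three variables, so $R$ is the completion of a normal local complete intersection surface ring. The completed class group $\Cl R$ is classically finite (the abelianization of the finite group attached to the Dynkin diagram via the McKay correspondence), so any subgroup $G \subset \Cl R$ is automatically finitely generated, which is the hypothesis needed to invoke Theorem~\ref{main}.

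The key preliminary step is to produce a normal surface $V \subset \mathbb P^3$ with a point $p$ such that $\widehat \O_{V,p} \cong R$ and such that the Mori map $\Cl \O_{V,p} \hookrightarrow \Cl R$ is surjective, so that the given $G$ lands in $\Cl \O_{V,p}$. I would begin with the homogenization $V_0 \subset \mathbb P^3$ of the defining polynomial of the RDP, which realizes $R$ analytically at $p = [0{:}0{:}0{:}1]$. For each of the explicit generators of $\Cl R$, choose an integral projective curve $\Gamma_i \subset \mathbb P^3$ through $p$ whose local analytic class at $p$ is that generator (for instance, lines in the case of an $\rdA_n$ singularity, where $\Cl R$ is cyclic and generated by a ruling). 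Let $Z \subset \mathbb P^3$ be the union of the $\Gamma_i$ together with a sufficiently thick multiplicity structure supported at $\{p\}$, chosen so that Ruiz's lemma \cite[Prop. 4.1]{BN5} applies. By our Noether-Lefschetz theorem \cite[Thm. 1.1]{BN}, the very general $V \in |H^0(\I_Z(d))|$ for $d \gg 0$ is normal, has $\widehat \O_{V,p} \cong R$, and has $\Cl V$ generated by the classes of the $\Gamma_i$. Hence $\Cl \O_{V,p}$ is generated by the local classes of the $\Gamma_i$, which by construction surject onto $\Cl R$, so the Mori map is an isomorphism.

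With this $V$ in hand, the given $G$ is a subgroup of $\Cl \O_{V,p}$, and Theorem~\ref{main} (i.e., Theorem~\ref{fgsubgp} applied via Lemma~\ref{intrep}) immediately produces a geometric normal domain $B$ with $\widehat B \cong \widehat \O_{V,p} \cong R$ and $\Cl B = G \subset \Cl R$, which is the required conclusion.

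The main obstacle is the first step, namely the initial selection of projective curves $\Gamma_i \subset \mathbb P^3$ whose local classes at $p$ exhaust a set of generators for $\Cl R$. For the classified rational double points this is a case-by-case verification using the explicit normal forms; but once it is granted, Theorem~\ref{main} absorbs the bulk of the combinatorial argument of \cite{BN4}, yielding the short proof advertised before the statement.
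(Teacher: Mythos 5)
Your overall architecture matches the paper's: since $\Cl R$ is finite, it suffices to exhibit, for each Dynkin type, one surface $S\subset\mathbb P^3$ with a point $p$ realizing $R$ analytically and with surjective Mori map $\Cl\O_{S,p}\to\Cl R$; then Theorem~\ref{main} applied to that surface handles every subgroup $G$. However, your write-up has a genuine gap exactly where you concede one: the ``initial selection'' of integral curves $\Gamma_i$ whose local classes generate $\Cl R$ is not a routine verification to be ``granted'' --- it is the entire content of the paper's proof. It is not automatic that the standard affine model of, say, a $\rdD_n$ or $\rdE_7$ singularity carries algebraic curves whose classes exhaust the completed class group (a priori the Mori map of any given model could have proper image, and indeed for other models it does --- see the remark after Proposition~\ref{background} on the $\rdA_3$). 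The paper supplies the generators explicitly, case by case: the line $x=z=0$ on $xy=z^{n+1}$ for $\rdA_n$; the curves $x=z=0$ and $x=y-z^{k-1}=0$ (resp.\ $y=x-z^k=0$) on $x^2+y^2z-z^{n-1}=0$ for $\rdD_{2k}$ (resp.\ $\rdD_{2k+1}$), generating $(\Z/2\Z)^2$ (resp.\ $\Z/4\Z$); the curve $(x-z^2,y)$ on $x^2+y^3-z^4$ for $\rdE_6$; the line $(x,y)$ on $x^2+y^3+yz^3$ for $\rdE_7$; and nothing for $\rdE_8$, which is a complete UFD. These verifications rest on the computations in \cite{picgps} and \cite[Prop.~4.5]{BN4}, and without them your argument does not close. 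A smaller point: a curve $\Gamma_i\subset\mathbb P^3$ does not have a ``local analytic class at $p$'' until it is placed on a surface, so the generators must be chosen \emph{on} the model $V_0$ from the outset.

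Separately, your intermediate Noether--Lefschetz step is redundant. Once you have curves on $V_0$ whose classes generate $\Cl\widehat\O_{V_0,p}$, those classes already lie in the image of the Mori map of $V_0$ itself, so $\Cl\O_{V_0,p}\to\Cl R$ is surjective and you may feed $V_0$ and the subgroup $G$ directly into Theorem~\ref{main}; constructing a new very general $V$ containing the $\Gamma_i$ first, and then invoking Theorem~\ref{main} a second time, duplicates work without gaining anything.
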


\begin{proof} For these singularities $\Cl R$ is a finite group, so it suffices 
to find a geometric local ring $(B, \fn)$ with $\widehat B \cong R$ and $\Cl B = \Cl R$ 
for each singularity type. 
For the $\rdA_n$ singularity with equation $xy-z^{n+1}$, 
the line $L:x=z=0$ is a generator for 
$\Cl \O_{S,p} \cong \Cl \widehat \O_{S,p} \cong \mathbb Z /(n+1) \mathbb Z$~\cite[Ex. 2.1]{picgps} (smaller subgroups of $\Cl \widehat \O_{S,p}$ are constructed in \cite[Prop. 3.3 and Prop. 3.4]{picgps}). 
Similarly \cite[Prop. 4.5]{BN4}, the $\rdD_n$ singularity is obtained by 
completing the local ring at the origin $p$ of a surface $S$ with equation $x^2+y^2 z - z^{n-1}=0$: 
for $n=2k$ the curves $x=z=0$ and $x=y-z^{k-1}=0$ generate 
$\Cl \widehat \O_{S,p} \cong \Z/2\Z \oplus \Z/2\Z$, while for $n=2k+1$ the 
curve $y=x-z^{k}=0$ generates $\Cl \widehat \O_{S,p} \cong \Z/4\Z$. 
For an $\rdE_6$ use the equation $x^2+y^3-z^4$ and the curve having ideal $(x-z^2,y)$ 
to generate $\Cl \O_{S,p} \cong \Z/3\Z$; for an $\rdE_7$, use the equation $x^2+y^3+yz^3$ 
and the line $(x,y)$ to generate $\Cl \O_{S,p} \cong \Z/2\Z$. 
A complete $\rdE_8$ is a UFD, so $\Cl \widehat \O_{S,p} = 0$.
\end{proof}

\section{Vertex singularities}

Here we study the singularity at the vertex $p$ of a cone $V$ over a 
smooth variety $X$. Our main results state that 
(a) every finitely generated abelian group occurs as the class group of a vertex singularity and (b) Conjecture \ref{whynot} holds when $X$ is a smooth arithmetically Cohen-Macaulay variety. Both follow from  
Theorem \ref{fgsubgp} and Proposition \ref{background}, which gives a 
geometric description of the groups 
$\Cl \O_{V,p} \hookrightarrow \Cl \widehat \O_{V,p}$. 
The first three parts are well-known, but the last part is new to 
our knowledge. 

\begin{prop}\label{background}
Let $X \subset \mathbb P^n$ be a smooth projective variety over an algebraically closed field with 
projective cone $V \subset \mathbb P^{n+1}$ having vertex $p$ and 
hyperplane section $H$. 
Then 
\begin{enumerate}
\item[(a)] $\Cl V \cong \Cl X$.
\item[(b)] The natural map $\Spec \O_{V,p} \to (V-H)$ induces an isomorphism on class groups 
$\displaystyle \Cl \O_{V,p} \cong \Cl (V-H) \cong \Pic X / \langle \O_X (1) \rangle.$
\item[(c)] The blow-up  $\tilde V$ of $V$ at $p$ is a $\mathbb P^1$-bundle over $X$ with exceptional divisor $E \cong X$. 
\item[(d)] The map $\iota: \Cl \O_{V,p} \to \Cl \widehat \O_{V,p}$ is a split injection. If $H^1(\O_X (k)) = H^2(\O_X (k)) = 0$ for $k \geq d$, then $\iota$ 
may be identified with the map $\Pic E_d / \langle E|_{E_d} \rangle \to \Pic E / \langle E|_E \rangle$ induced by the restriction, where $E_d$ is the $d$th infinitesimal 
neighborhood of $E \subset \tilde V$. 
\end{enumerate}
\end{prop}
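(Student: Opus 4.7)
Parts (a)--(c) are standard; I would sketch them via the geometry of $\tilde V \to X$. For (c), realize $\tilde V$ as the restriction to $X \subset \P^n$ of the $\P^1$-bundle $\P(\O_{\P^n}\oplus\O_{\P^n}(-1))\to\P^n$ coming from the blow-up of $\P^{n+1}$ at $p$; this gives $\tilde V = \P(\O_X\oplus\O_X(-1))$ with $E$ the zero section and $N_{E/\tilde V} = \O_X(-1)$, a fact needed for (d). For (a), the bundle structure yields $\Pic\tilde V = \pi^*\Pic X \oplus \Z\cdot[E]$, and excision at the codimension-$\geq 2$ contracted point gives $\Cl V = \Pic\tilde V/\langle E\rangle = \Pic X = \Cl X$. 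For (b), a fiber computation identifies $H$ with $\O_X(1)$ under this isomorphism, and the standard fact that the class group of a normal positively graded $\C$-algebra agrees with the class group of its localization at the irrelevant maximal ideal yields $\Cl\O_{V,p} = \Cl(V-H) = \Pic X/\langle\O_X(1)\rangle = \Pic E/\langle E|_E\rangle$.

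For (d), I treat the split injection first. Let $\widehat{\tilde V}$ be the formal completion of $\tilde V$ along $E$ and $Y = \tilde V\times_V\Spec\widehat\O_{V,p}$ its algebraization, a projective birational resolution of the completed singularity with $\Pic Y = \Pic\widehat{\tilde V}$ by Grothendieck's existence theorem. Classical excision on $Y$ gives $\Cl\widehat\O_{V,p} = \Pic\widehat{\tilde V}/\langle E\rangle$, and restriction of line bundles to $E$ defines a map $\Pic\widehat{\tilde V}/\langle E\rangle\to\Pic E/\langle E|_E\rangle = \Cl\O_{V,p}$. This retracts $\iota$: any class in $\Cl\O_{V,p}$ is represented by a line bundle pulled back via $\pi$ from $X$, whose extension to $\widehat{\tilde V}$ restricts back to itself on $E$. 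Hence $\iota$ is a split injection with splitting given by restriction to $E$.

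Under the cohomological vanishings I next compute $\Pic\widehat{\tilde V}=\varprojlim_n\Pic E_n$ via the theorem on formal functions. Since $N_{E/\tilde V}^\vee = \O_X(1)$, the ideal-sheaf graded pieces satisfy $\I_E^n/\I_E^{n+1}\cong\O_X(n)$, and the exponential short exact sequence $0\to\I_E^n/\I_E^{n+1}\to\O^*_{E_n}\to\O^*_{E_{n-1}}\to 0$ yields
\[
H^1(X,\O_X(n)) \to \Pic E_n \to \Pic E_{n-1}\to H^2(X,\O_X(n)).
\]
For $n>d$ both end-terms vanish, so each restriction is an isomorphism and the inverse limit stabilizes at $\Pic E_d$. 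Passing to the quotient by $\langle E|_{E_d}\rangle$ identifies $\Cl\widehat\O_{V,p}$ with $\Pic E_d/\langle E|_{E_d}\rangle$, and the splitting of $\iota$ from the previous paragraph becomes the restriction $\Pic E_d/\langle E|_{E_d}\rangle\to\Pic E/\langle E|_E\rangle$ of the proposition. The main obstacle is the formal excision step identifying $\Cl\widehat\O_{V,p}$ with $\Pic\widehat{\tilde V}/\langle E\rangle$; once this is set up via Grothendieck's existence theorem, the cohomological computation is mechanical.
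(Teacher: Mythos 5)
Your proposal follows essentially the same route as the paper's: blow up at $p$, identify the Picard group of the blown-up completed singularity with $\varprojlim \Pic E_k$ via formal GAGA, stabilize the tower at $\Pic E_d$ using the vanishing hypotheses together with the fragments coming from $0 \to \I_E^k/\I_E^{k+1} \to \O^*_{E_{k+1}} \to \O^*_{E_k} \to 0$, and obtain the splitting from the fact that classes in $\Cl \O_{V,p}$ are represented by pullbacks from $E$ via the $\mathbb A^1$-bundle (respectively $\mathbb P^1$-bundle) structure. The only slip is an off-by-one in labeling the kernel of $\O^*_{E_n} \to \O^*_{E_{n-1}}$ (it is $\I_E^{n-1}/\I_E^{n} \cong \O_X(n-1)$, not $\O_X(n)$), which is harmless since your stated range $n > d$ and the conclusion that the limit is $\Pic E_d$ are the correct ones.
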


\begin{proof} Parts (a) and (b) are \cite[II, Exer. 6.3]{AG} and (c) is 
\cite[V, Ex. 2.11.4]{AG}. 
For (d), let $W = \Spec \O_{V,p}$ and 
$Z = \Spec \widehat \O_{V,p}$. 
The morphisms 
$Z \to W \to (V - X)$ extend to morphisms 
$\tilde Z \to \tilde W \to (\widetilde V-X)$ of the blow-ups at $p$. 
All three are smooth and share the exceptional divisor 
$E = \Proj \oplus {\mathfrak m}_p^n / {\mathfrak m}_p^{n+1}$ because 
the quotients ${\mathfrak m}_p^n / {\mathfrak m}_p^{n+1}$ are isomorphic for 
$n \geq 1$. 
Likewise, they share the infinitesimal neighborhoods $E_k$ for $k \geq 1$ 
because the local rings $\O_p/ {\mathfrak m}_p^n$ are isomorphic for $n \geq 1$. 
Taking Picard groups gives a diagram with short exact rows in which the map 
$\alpha$ is the Mori map (\ref{inject}), geometrically realized as the 
pull-back map of line bundles:
\begin{equation}\label{five}
\begin{CD}
\mathbb Z  @>{\cdot E}>> \Pic (\tilde V - X) @>>> \Pic (\tilde V - E - X) \\
@VVV @VV{\beta}V @VV{\overline \beta}V \\ 
\mathbb Z  @>{\cdot E}>>  \Pic \tilde W @>>>\Pic (\tilde W - E) \,\, \cong @. @.\,\, \Cl \O_{V,p} \\ 
@VVV @VV{\gamma}V @VVV @. \,\, @VV{\alpha}V \\ 
\mathbb Z  @>{\cdot E}>>\Pic \tilde Z @>>> \Pic (\tilde Z - E) \,\, \cong @. @.\,\, \Cl \widehat \O_{V,p}
\end{CD}
\end{equation}

The map $W \to V - X$ induces an isomorphism $\Cl (V-X) \to \Cl \O_{V,p}$ by part (b). 
Blowing up the vertex $p$ we obtain 
${\overline \beta}: \Pic (\tilde V - E - X) \cong \Pic (\tilde W - E)$, 
so $\beta$ is also an isomorphism by the Snake lemma. 
Since $\tilde V - X \to E$ is an $\mathbb A^1$-bundle, 
the pull-back $\Pic E \to \Pic (\tilde V - X)$ is an isomorphism whose 
inverse coincides with the restriction $\Pic (\tilde V - X) \to \Pic E$, 
hence the restriction $\Pic \tilde W \to \Pic E$ is also an isomorphism. 

Now consider the map $\gamma$ and let $\widehat Z$ be the formal completion of $Z$ along $E$. 
The category of coherent $\O_Z$-modules is equivalent to the category of coherent 
$\O_{\widehat Z}$-modules \cite[II, Thm. 9.7]{AG} (see also \cite[III, Thm. 5.1.4]{G}), therefore we obtain an 
isomorphism $\Pic \tilde Z \cong \Pic \widehat Z$. Furthermore, there is an isomorphism 
$\displaystyle \Pic \widehat Z \cong \lim_{\longleftarrow} \Pic E_k$
where $E_k$ is the scheme structure on $E$ given by the ideal $\I_E^k$ \cite[II, Exer. 9.6]{AG}. This allows us to express the restriction map $\Pic \tilde Z \to \Pic E$ as the composite 
\begin{equation}\label{one}
\Pic \tilde Z \cong \Pic \widehat Z \cong \lim_{\longleftarrow} \Pic E_k \to \Pic E
\end{equation} 
where the last map is the projection onto $\Pic E_1 = \Pic E$ from the inverse limit. 
The long exact cohomology sequences associated to the exact sequences 
\[
0 \to \I_E^k / \I_E^{k+1} \to \O_{E_{k+1}}^* \to \O_{E_k}^* \to 0 
\]
yield exact fragments 
\begin{equation}\label{frag}
H^1(\I_E^k / \I_E^{k+1}) \to \Pic E_{k+1} \to \Pic E_k \to H^2(\I_E^k / \I_E^{k+1}). 
\end{equation}
Making the identification $E \cong X \subset H \cong \mathbb P^{n}$, the canonical 
$\O (1)$ from the Proj construction of blow-up is both the pull-back of $\O_{\mathbb P^n} (1)$  
and given by $-E$, so $\I_E \cong \O (1)$ and $\I_E^k / \I_E^{k+1} \cong \O (k) \otimes \O_E \cong \O_X (k)$ via the isomorphism $X \cong E$. In view of (\ref{frag}) and the vanishing hypotheses we see that $\Pic E_{k+1} \to \Pic E_k$ is an isomorphism for each $k \geq d$, so the direct limit is isomorphic to $\Pic E_d$. In other words, the restriction 
map $\Pic \tilde Z \to \Pic E_d$ is an isomorphism. Restricting Daigram (\ref{five}) 
to $E \subset E_d$ gives the commuting diagram 

\[
\begin{CD}
0 @>>> \Pic((\tilde V - X) @>>> \Pic E_d @>>> \Pic E @>>> 0 \\
@. @VV{\gamma\circ\beta}V @VVV @VVV @. \\
0 @>>> \Pic \tilde Z @>{\cong}>> \Pic E_d @>>> \Pic E @>>> 0
\end{CD}
\]
in which the horizontal maps are restrictions. Since the composite  
$\Pic (\tilde V - X) \to \Pic E$ is an isomorphism, the vertical map 
$\Pic (\tilde V - E) \to \Pic \tilde Z$ is a splitting for the restriction 
map $\Pic E_d \to \Pic E$. When we quotient out by the subgroup 
$\langle E \rangle$ we obtain the diagram 
\[
\begin{CD}
0 @>>> \mathbb Z @>{\cdot E}>>  \Pic (\tilde V - E) @>>> \Cl \O_{V,p} @>>> 0\\
@. @VVV @VVV @VVV @.\\ 
0 @>>> \mathbb Z @>{\cdot E}>>  \Pic \tilde Z @>>> \Cl \widehat \O_{V,p} @>>> 0\\ 
@. @VVV @VVV @VVV @.\\
0 @>>> \mathbb Z @>{\cdot E}>>  \Pic E @>>> \Cl \O_{V,p} @>>> 0
\end{CD}
\]
whose rows are short exact sequences. This diagram identifies
$\Cl \O_{V,p} \cong \Pic E / \langle E|E \rangle$ and 
$\Cl \widehat \O_{V,p} \cong \Pic E_d / \langle E|_{E_d} \rangle$. 
Since the composite of the two rightmost vertical maps is the identity, 
the Mori map $\Cl \O_{V,p} \to \Cl \widehat \O_{V,p}$ 
is a split injection. 
\end{proof} 

\begin{rmk}{\em 
The Mori map (\ref{inject}) does not split in general. For example, the 
$\rdA_3$ singularity ring $R$ has completed class group $\mathbb Z / 4 \mathbb Z$ and 
there are geometric rings $A$ with completion $R$ for which $\Cl A = \mathbb Z / 2 \mathbb Z$ as a subgroup \cite{BN4}.
\em}\end{rmk}

\subsection{Finitely generated groups are class groups at vertex singularities}

\begin{thm}\label{anyfg}
Let $G$ be any finitely generated abelian group. Then there is a geometric normal 
domain $B$ of dimension two with $\Cl B \cong G$. 
\end{thm}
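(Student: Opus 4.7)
The plan is to realize $G$ as $\Cl \O_{V,p}$ for the vertex $p$ of the cone $V \subset \mathbb P^3$ over a suitably chosen smooth plane curve $C \subset \mathbb P^2$ and then apply Theorem \ref{main} to produce a two-dimensional geometric local ring $B$ with $\widehat B \cong \widehat \O_{V,p}$ and $\Cl B = G$. The hypotheses of Theorem \ref{main} are met in this setup: $V$ is a hypersurface in $\mathbb P^3$ defined by the same polynomial as $C$, hence a complete intersection, and the vertex is a normal point because any smooth plane curve is arithmetically normal.

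First I would compute $\Cl \O_{V,p}$ in group-theoretic terms. By Proposition~\ref{background}(b), $\Cl \O_{V,p} \cong \Pic C / \langle \O_C(1) \rangle$. A choice of basepoint on $C$ gives an isomorphism $\Pic C \cong \mathbb Z \oplus J(C)$ under which $\O_C(1)$ has the form $(d,\alpha)$ with $d = \deg C$ and $\alpha \in J(C)$. Since $J(C)(\mathbb C)$ is divisible, choose $\beta \in J(C)$ with $d\beta = \alpha$; the automorphism $(n,x)\mapsto (n, x - n\beta)$ of $\mathbb Z \oplus J(C)$ sends $(d,\alpha)$ to $(d,0)$, giving
\[
\Cl \O_{V,p} \;\cong\; \mathbb Z/d\mathbb Z \,\oplus\, J(C).
\]

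Finally, I would embed $G$ into this direct sum. Write $G = \mathbb Z^r \oplus \bigoplus_{i=1}^k \mathbb Z/n_i\mathbb Z$ in invariant factor form and set $N = \mathrm{lcm}(n_1, \dots, n_k)$. Choose $d$ large enough that the genus $g = (d-1)(d-2)/2$ of $C$ satisfies $g \geq k$; then $J(C)[N] \cong (\mathbb Z/N\mathbb Z)^{2g}$ contains a subgroup isomorphic to $\bigoplus_i \mathbb Z/n_i\mathbb Z$ placed in the first $k$ of its $2g$ cyclic factors. The torsion-free quotient $J(C)/J(C)_{\mathrm{tors}}$ is a $\mathbb Q$-vector space of uncountable dimension and therefore contains a rank $r$ free subgroup; the divisibility of $J(C)$ yields a splitting $J(C) \cong J(C)_{\mathrm{tors}} \oplus J(C)/J(C)_{\mathrm{tors}}$, and combining the two pieces produces an embedding $G \hookrightarrow J(C) \hookrightarrow \Cl \O_{V,p}$. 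Applying Theorem~\ref{main} to this finitely generated subgroup produces the desired local ring $B$.

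The only genuine work is Theorem~\ref{main}, which is already established; the remainder is elementary group-theoretic bookkeeping. The one mildly subtle point is ensuring that the torsion and torsion-free parts of $G$ sit in direct sum inside $J(C)$, but this is automatic from divisibility, so no real obstacle arises.
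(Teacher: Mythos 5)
Your proposal is correct and follows essentially the same route as the paper: realize $G$ inside $\Cl \O_{V,p} \cong \Pic C/\langle \O_C(1)\rangle$ for the cone over a smooth plane curve of large degree via the injection of $\Pic^0(C)=J(C)$ into that quotient, then invoke Theorem~\ref{main}. The only (immaterial) difference is bookkeeping: the paper places generators in distinct summands of $J(C)\cong(\R/\Z)^{2g}$ with $g\ge\frac{1}{2}(r+s)$, while you split off the torsion of $J(C)$ by divisibility; both yield an embedded copy of $G$.
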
 

\begin{proof}
Write $G \cong \Z^r \oplus \bigoplus_{i=1}^s \Z/n_i\Z$ for suitable $r,s, n_i$. 
Choose a smooth plane curve $C$ of sufficiently high degree that its genus 
satisfies $g \ge \frac{1}{2}(r+s)$. The vertex $p$ of the 
cone $S$ over $C$ has class group $\Cl \O_{S,p} \cong \Pic C/\langle\O_C(1)\rangle$ 
by Proposition \ref{background}.
Since the only degree-$0$ class in $\langle\O_C(1)\rangle$ is $0$, 
the composite map
\[
{\Pic}^0 (C) \to \Pic C \to \Pic C/\langle\O_C(1)\rangle
\]
is injective, where $\Pic^0 (C)$ is the subgroup of $\Pic C$ consisting of the 
degree-$0$ classes. 
Since $\Pic^0(C)$ is isomorphic to the Jacobian variety $J(C)$, 
which for the complex curve $C$ is isomorphic to $\C^{g}/\Lambda$ with 
$\Lambda$ a rank-$(2g)$ lattice in $\C^{g}$, we see that 
\[
{\Pic}^0 (C) \cong \R^{2g}/\Z^{2g} \cong \left(\R/\Z\right)^{2g}
\]
as an additive group. Since $\R / \Z$ has elements of all positive orders 
(including $\infty$), we can choose $r$ elements of summands having order 
$\infty$ and $s$ elements having respective orders $n_i$, which generate a 
subgroup of $\R^{2g}/ \Z^{2g}$ isomorphic to $G$. Now apply Theorem \ref{fgsubgp} 
to these elements. 
\end{proof}

\begin{rmk}\label{higher}{\em
In Theorem \ref{anyfg} above we constructed $B$ as the local ring at the 
vertex of the cone over a smooth plane curve. To obtain rings $B$ of 
higher dimension we can adjust the proof as follows. Choose $C$ as above and let 
$X = C \times \mathbb P^l$ for any $l \geq 1$. 
Then $\Pic X \cong \Pic C \oplus \mathbb Z$ \cite[II. Exer. 6.1]{AG} and 
$\O_C (1) \otimes \O_{\mathbb P^l} (1)$ is very ample on $X$, giving a 
closed immersion $X \hookrightarrow \mathbb P^N$. 
Letting $V \subset \mathbb P^{N+1}$ be the projective cone with vertex $p$, 
we obtain $\Cl \O_{V,p} \cong \Pic C \oplus \mathbb Z / \langle \O_C (1) \otimes \O_{\mathbb P^l} (1) \rangle$ and again the composite map 
\[
J(C) = {\Pic}^0 (C) \to \Pic C \oplus \mathbb Z \to \Pic C \oplus \mathbb Z / \langle \O_C (1) \otimes \O_{\mathbb P^l} (1) \rangle
\]
is injective allowing us to run the same construction. 
\em}\end{rmk}

\subsection{Cones over Arithmetically Cohen-Macaulay varieties} 

\begin{thm}\label{vanish}
Let $X \subset \mathbb P^n$ be a smooth complex variety of dimension at least two 
for which $H^1(\O_X (k))=H^2(\O_X (k))=0$ for all $k > 0$. If $V \subset \mathbb P^{n+1}$ is 
the projective cone with vertex $p$, then Conjecture \ref{whynot} holds for 
$R = \widehat \O_{V,p}$. 
\end{thm}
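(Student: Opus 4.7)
The plan is to use the vanishing hypothesis together with Proposition \ref{background}(d) to show that under our assumptions the Mori map $\iota: \Cl \O_{V,p} \to \Cl R$ is already an isomorphism, and then invoke Theorem \ref{main} on $V$ itself to realize every finitely generated subgroup of $\Cl \O_{V,p}$---which, by the isomorphism, accounts for every finitely generated subgroup of $\Cl R$---as the class group of some geometric local ring $B$ with $\widehat B \cong R$.

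First I would observe that the hypothesis $H^1(\O_X(k)) = H^2(\O_X(k)) = 0$ for all $k > 0$ is precisely the vanishing required in Proposition \ref{background}(d) taken with $d = 1$. Since the first infinitesimal neighborhood $E_1$ is defined by $\I_E^1 = \I_E$ and thus coincides with $E$ as a subscheme of $\tilde V$, the groups $\Pic E_d / \langle E|_{E_d}\rangle$ and $\Pic E / \langle E|_E\rangle$ are literally the same, and the restriction-induced map appearing in that proposition becomes the identity. Consequently $\iota$ is an isomorphism, and any finitely generated subgroup $G \subset \Cl R$ corresponds to a finitely generated subgroup $G' := \iota^{-1}(G) \subset \Cl \O_{V,p}$.

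Next I would verify the hypotheses of Theorem \ref{main} for the pair $(V, G')$. Since $\dim X \geq 2$, we have $\dim V \geq 3$, so the ambient-dimension restriction ($n = 3$ when $\dim V = 2$) is vacuous. Because Conjecture \ref{whynot} presumes $R$ is the completion of a normal geometric complete intersection ring, the vertex $p \in V$ is a normal complete intersection point, and Theorem \ref{main} supplies a geometric local ring $B$ with $\widehat B \cong R$ and $\Cl B = G' \subset \Cl \O_{V,p}$. Passing through the isomorphism $\iota$ gives $\Cl B = G$ inside $\Cl R$, which is the desired conclusion. The main conceptual content lies in the first step: without the vanishing hypothesis, a finitely generated $G \subset \Cl R$ could contain classes lying strictly outside $\Cl \O_{V,p}$, and Theorem \ref{main} applied to $V$ could not realize them directly; one would then be forced to vary the ambient variety, which is precisely what the vanishing assumption lets us avoid.
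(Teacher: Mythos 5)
Your proposal is correct and follows essentially the same route as the paper: apply Proposition \ref{background}(d) with $d=1$ to see that the Mori map $\Cl \O_{V,p} \to \Cl \widehat\O_{V,p}$ is an isomorphism, then invoke Theorem \ref{main} (equivalently, Lemma \ref{intrep} plus Theorem \ref{fgsubgp}) on $V$ itself, the dimension restriction being vacuous since $\dim V \geq 3$. The paper's proof is just a two-line compression of exactly this argument.
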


\begin{proof}
The vanishing hypothesis allows us to apply Proposition \ref{background} 
with $d=1$, so we obtain the isomorphism 
$\Cl \O_{V,p} = \Cl \widehat \O_{V,p}$. Now apply Theorem \ref{fgsubgp} to $V$. 
\end{proof}

\begin{cor}\label{acm} 
Let $X \subset \mathbb P^n$ be a smooth ACM subvariety of dimension $\geq 2$ 
with projective cone $V \subset \mathbb P^{n+1}$ having vertex $p$. 
Then Conjecture \ref{whynot} holds for $R = \widehat \O_{V,p}$.
\end{cor}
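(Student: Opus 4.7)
The plan is to deduce Corollary~\ref{acm} directly from Theorem~\ref{vanish}: the entire content of the corollary reduces to checking that the cohomological hypothesis $H^1(\O_X(k)) = H^2(\O_X(k)) = 0$ for all $k > 0$ is automatic for a smooth ACM subvariety $X \subset \mathbb P^n$ of dimension at least two. Once that is in hand, feeding $X$ into Theorem~\ref{vanish} produces Conjecture~\ref{whynot} for $R = \widehat{\O_{V,p}}$ with no further work.

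The first step is to recall the standard cohomological reformulation of the ACM property. If $S_X$ denotes the homogeneous coordinate ring of $X$, then Cohen--Macaulayness of $S_X$ (which has Krull dimension $\dim X + 1$) is equivalent to $H^i_{\mathfrak m}(S_X) = 0$ for $i < \dim X + 1$. Via the graded identification $H^i_{\mathfrak m}(S_X)_k \cong H^{i-1}(X, \O_X(k))$ for $i \geq 2$, this translates into the sheaf-cohomology vanishing
\[
H^j(X, \O_X(k)) = 0 \quad \text{for all } k \in \mathbb Z \text{ and every } 1 \leq j \leq \dim X - 1.
\]
When $\dim X \geq 3$ this supplies $H^1(\O_X(k)) = H^2(\O_X(k)) = 0$ for every $k$, so the hypothesis of Theorem~\ref{vanish} is already in force.

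The delicate case, and the main obstacle I anticipate, is $\dim X = 2$, for which the ACM condition alone only yields $H^1(\O_X(k)) = 0$. To control $H^2(\O_X(k))$ for $k > 0$ I would apply Serre duality to write $H^2(X,\O_X(k)) \cong H^0(X, \omega_X(-k))^*$, and then exploit ampleness of $\O_X(1)$: multiplying by a nonzero section of $\O_X(k-1)$ injects $H^0(\omega_X(-k))$ into $H^0(\omega_X(-1))$, so the entire string of vanishings reduces to the single statement $H^0(\omega_X(-1)) = 0$. Verifying this last vanishing is the step I expect to need input beyond the ACM hypothesis itself, since some control on the canonical class of $X$ relative to $\O_X(1)$ is required. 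Once that observation is secured, Theorem~\ref{vanish} applies verbatim and Corollary~\ref{acm} follows.
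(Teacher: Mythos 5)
Your reduction to Theorem \ref{vanish} is exactly the argument the paper intends: the corollary is stated without proof as an immediate consequence of that theorem together with the standard fact that $X$ ACM implies $H^j(X,\O_X(k))=0$ for all $k\in\mathbb Z$ and $1\le j\le \dim X-1$. For $\dim X\ge 3$ your argument is complete and correct.

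The obstacle you flag in dimension $2$ is a genuine gap, and in fact it cannot be closed: $H^0(\omega_X(-1))=0$ does \emph{not} follow from the ACM hypothesis. A smooth quintic surface $X\subset\mathbb P^3$ is ACM (every hypersurface is), yet $\omega_X\cong\O_X(1)$ by adjunction, so $H^2(\O_X(1))\cong H^0(\omega_X(-1))^*\cong H^0(\O_X)^*\cong\mathbb C\ne 0$, and the hypothesis of Theorem \ref{vanish} fails. For such $X$ the Mori map $\Cl\O_{V,p}\to\Cl\widehat\O_{V,p}$ is a split injection whose cokernel contains a copy of $\mathbb C$ (exactly the situation of Example \ref{mystery}), so Theorem \ref{fgsubgp} applied to $V$ cannot reach every finitely generated subgroup of $\Cl\widehat\O_{V,p}$, and the paper supplies no alternative construction. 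The corollary is therefore too strong as written in dimension $2$; this is consistent with the paper's own list of examples following Corollary \ref{cone}, which covers surfaces in $\mathbb P^3$ only up to degree $4$ (precisely the range in which $H^0(\omega_X(-1))=H^0(\O_X(d-5))=0$) rather than subsuming them under the ACM clause. In short: your proof is correct and complete for $\dim X\ge 3$, and the step you could not justify in dimension $2$ is a defect of the statement itself, not of your argument; the honest fix is to restrict the corollary to $\dim X\ge 3$, or to add the hypothesis $H^0(\omega_X(-1))=0$ when $\dim X=2$.
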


\begin{ex}\label{special}{\em
Corollary \ref{acm} provides many completed local rings of vertex singularities for which Conjecture \ref{whynot} holds, but to apply Theorem \ref{vanish} we only used 
the vanishings $H^1 (\O_X (k)) = H^2 (\O_X (k))=0$ for $k > 0$. 
Here are examples which need not be ACM:
\begin{enumerate}
\item[(a)] Plane curves $X \subset \mathbb P^2$ of degree $d \leq 3$. Note that 
the cone $V = C(X) \subset \mathbb P^3$, so we are staying within the restricted 
hypothesis in Theorem \ref{fgsubgp}. 
\item[(b)] Surfaces $X \subset \mathbb P^3$ of degree $d \leq 4$. 
\item[(c)] If $X \subset \mathbb P^n$ is a smooth variety 
of dimension $\geq 2$, composition with a $d$-uple embedding 
$\mathbb P^n \hookrightarrow \mathbb P^N$ for $d \gg 0$ yields 
$X \subset \mathbb P^N$ for which Theorem \ref{vanish} applies.
\end{enumerate}
\em}\end{ex}

\begin{ex}\label{mystery}{\em
The simplest case where Conjecture \ref{whynot} is unknown to us is 
the cone over a smooth plane quartic $X \subset \mathbb P^2$. 
Here all the vanishing hypotheses of Theorem \ref{vanish} hold except that 
$H^1(\O_X (1)) \cong \mathbb C$. 
Letting $V \subset \mathbb P^3$ be the cone with vertex $p$ and blow-up 
$\tilde V$ with exceptional divisor $E$ as in the proof, 
The exact sequence (\ref{frag}) takes the form 
\begin{equation}
0 \to \C \to \Pic E_2 \to \Pic E_1 \to 0
\end{equation}
for $k=1$ since $H^0(E_2, \O_{E_2}^*) \to H^0(E_1, \O_{E_1}^*)$ is surjective 
(nonzero constants are global units on $E_2$) and hence the Mori map 
$\Cl \O_{V,p} \to \Cl \widehat \O_{V,p}$ is a split injection with 
cokernel isomorphic to $\mathbb C$. We can no longer apply Theorem \ref{fgsubgp} 
with $V$, but there may well be another geometric local ring with completion 
$\widehat \O_{V,p}$ giving geometric divisors outside the image above. 
Are all the divisors in $\Cl \widehat \O_{V,p}$ geometric? 
\em}\end{ex}

\section{Surfaces containing general lines through a point}

In Theorem \ref{fgsubgp} we represented elements of a local class group
$\Cl \O_{V,p}$ by integral divisors $C_i$ with Lemma \ref{intrep} and 
carefully chose a base locus $B$ consisting of the $C_i$ and some infinitesimal data 
to pick out the group generated by them for general $W$ containing 
$B$. 
For $B = \bigcup C_i$ we do not expect the isomorphism 
$\Cl \widehat \O_{W,p} \cong \Cl \widehat \O_{V,p}$, 
but we can still expect some kind of approximation.  
In this section we give a method for comparing class groups of singularities 
having the same tangent cones and apply it to compute the local 
class group of the general surface containing a set of general lines through a 
point $p$ (Proposition \ref{rlines}) and for a general surface containing three double lines (Example \ref{3doublelines}).

Given integral divisors $C_i$ on $V \subset \mathbb P^n$ as in Theorem \ref{fgsubgp}, 
what can be said about $\Cl \O_{W,p}$ for the general complete intersection 
$W$ containing $\bigcup C_i$? If $V$ and $W$ have equal smooth tangent cones 
$E_V = E_W$ in their respective blow-ups at $p$ in $\widetilde {\mathbb P^n}$, 
we can at least make a first-order estimate in the sense that 
\[
\Cl \widehat \O_{W,p} \cong \lim_{\longleftarrow} \Pic E^n_W, \;\;\;\;\;
\Cl \widehat \O_{V,p} \cong \lim_{\longleftarrow} \Pic E_V^n
\]
where $E^n$ is the $n$th infinitesimal neighborhood of $E$. 

\begin{lem}\label{commute} Let $V$ and $W$ be subvarieties of $\P^n$ of the same dimension containing common integral divisors $C_1, C_2, \dots C_s$ 
and having an isolated singularity at $p$. Suppose further that $V$ and $W$ have equal tangent cones at $p$ that induce equal smooth exceptional divisors $E_V=E_W$ on the blowups 
$\widetilde V, \widetilde W \subset \widetilde {\mathbb P^n}$ at $p$. 
Then the diagram 
\begin{equation}\label{weakening}
\begin{CD}
\mathbb Z^s @>{C_i}>>  \Cl \O_{W,p}  \,\,\subset \,\, \Cl \widehat \O_{W,p} @>>> \Pic E_W \\ 
@| \,\, @. \,\, @| \\
\mathbb Z^s @>{C_i}>>   \Cl \O_{V,p} \,\, \subset \,\, \Cl \widehat \O_{V,p} @>>> \Pic E_V 
\end{CD}
\end{equation}  
commutes, where the rightmost vertical map is the equality induced by $E_V=E_W \subset \mathbb P^{n-1}$. 
\end{lem}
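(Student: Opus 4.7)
The plan is to factor both composites in the diagram through the blowup $\pi: \widetilde{\mathbb P^n} \to \mathbb P^n$ at $p$ and to observe that the strict transforms of the shared divisors $C_i$ live in this single ambient space, meeting the common exceptional divisor $E = E_V = E_W$ in the same subscheme. First, I would set $\widetilde V, \widetilde W \subset \widetilde{\mathbb P^n}$ to be the strict transforms; by the tangent-cone hypothesis we have $\widetilde V \cap \widetilde E = E_V = E_W = \widetilde W \cap \widetilde E$, where $\widetilde E \cong \mathbb P^{n-1}$ is the $\pi$-exceptional divisor. Smoothness of $E$ forces $\widetilde V$ and $\widetilde W$ to be smooth in a neighborhood of $E$, so $\widetilde V \to V$ and $\widetilde W \to W$ are local resolutions at $p$ and $E$ is Cartier on each.

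Next, I would interpret the map $\Cl \widehat \O_{V,p} \to \Pic E_V$ geometrically. Passing to the analytic blowup $\widetilde Z_V \to \Spec \widehat \O_{V,p}$ (whose exceptional divisor is again $E_V$), we have $\Cl \widehat \O_{V,p} \cong \Pic \widetilde Z_V / \langle E_V \rangle$, and the map is restriction of line bundles to $E_V$, well-defined modulo $\langle E_V|_{E_V} \rangle$ as in the proof of Proposition \ref{background}(d). On an integral $C_i$ the image is $\O_{\widetilde V}(\widetilde C_i^V)|_{E_V}$, where $\widetilde C_i^V$ is the strict transform in $\widetilde V$, which is Cartier near $E_V$ by smoothness. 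The analogous description holds on the $W$ side.

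The key geometric point is that the strict transform is intrinsic to the ambient $\widetilde{\mathbb P^n}$. Since $C_i \subset V \cap W$ is integral, the closure $\widetilde C_i \subset \widetilde{\mathbb P^n}$ of $\pi^{-1}(C_i \setminus \{p\})$ is contained in both $\widetilde V$ and $\widetilde W$ and coincides with $\widetilde C_i^V$ and $\widetilde C_i^W$ as subschemes. The scheme-theoretic intersection $\widetilde C_i \cap E$ is therefore computed in $\widetilde{\mathbb P^n}$ alone, so under the identification $\Pic E_V = \Pic E_W$ coming from $E_V = E_W \subset \widetilde E$, both composites send $e_i \in \mathbb Z^s$ to the class of $\widetilde C_i \cap E$ in $\Pic E$, proving commutativity.

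The main obstacle I anticipate is making the map $\Cl \widehat \O_{V,p} \to \Pic E_V$ precise at the level of divisor classes, since pullback of a principal divisor and its strict transform differ by a multiple of $E_V$, introducing an ambiguity modulo $\langle E_V|_{E_V} \rangle$ that must either be absorbed into the target or shown to be common to both $V$ and $W$ (which it is, because $E_V = E_W$ inherit the same normal bundle from $\widetilde{\mathbb P^n}$). Granting this, the geometric identity $\widetilde C_i^V = \widetilde C_i^W$ in $\widetilde{\mathbb P^n}$ closes the argument.
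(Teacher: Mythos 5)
Your argument is correct and is essentially the paper's own proof: both reduce commutativity to the observation that the strict transform $\widetilde C_i \subset \widetilde{\mathbb P^n}$ is intrinsic to the ambient blowup, so its intersection with the common exceptional divisor $E_V = E_W$ computes the image of $C_i$ along either row. The paper states this in one line; your extra care about smoothness along $E$ and the ambiguity modulo $\langle E|_E \rangle$ is a reasonable elaboration, not a different route.
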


\begin{proof} 
The images of the classes of $C_i$ in $\Pic E_W$ and $\Pic E_V$ are given by the 
intersection with the strict transforms 
$\widetilde C_i \subset \widetilde {\mathbb P^n}$, 
hence are equal by construction.  
\end{proof}

Now consider $r$ lines containing a fixed point $p \in \mathbb P^3$. 
When $r=2$, the general surface $S$ containing the lines is smooth 
and $\Pic S = \mathbb Z^{3}$ is freely generated by $\O (1)$ and the two 
lines \cite[Cor. 1.3]{BN}. In this section we give the answer when $r>2$ 
and the lines are in general position, 
determining the nature of the singularity on $S$ where the lines meet and which 
classes represent Cartier divisors.
It turns out that the singularity is analytically isomorphic to that of a 
cone over a plane curve. 
The following lemma tells us that the classes of are independent in the 
class group of such a cone. 

\begin{lem}\label{5pts}
Fix $r > 2$ and $d > 1$ satisfying 
\begin{equation}\label{mind}
\binom{d+1}{2} \leq r < \binom{d+2}{2}. 
\end{equation} 
Then $r$ very general points $\{p_1, p_2, \dots, p_r\} \subset
\mathbb P^2$ lie on a 
smooth curve $D$ of degree $d$ and no curve of degree $d-1$. If $r > 5$, 
then for the general such $D$ containing the points, the natural map 
$\mathbb Z^r \stackrel{p_i}{\to} \Pic D/\langle \O_D (1) \rangle$ is
injective.
\end{lem}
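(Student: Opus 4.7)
My plan is to separate the two assertions: first the existence/non-existence of curves of degrees $d$ and $d-1$ by a parameter count plus Bertini, then the injectivity by a countable-union argument in the universal parameter space.

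For the first assertion, I would form the incidence variety $V_e\subset(\P^2)^r\times|\O_{\P^2}(e)|$ of pairs $((p_i),C)$ with $p_i\in C$ for all $i$; the projection to $|\O(e)|$ has $r$-dimensional fibers $C^r$, so $\dim V_e=\binom{e+2}{2}-1+r$. When $e=d-1$, the hypothesis $r\geq\binom{d+1}{2}$ forces $\dim V_{d-1}<2r$, so the other projection cannot be dominant and the very general $r$-tuple lies on no curve of degree $d-1$. When $e=d$, the hypothesis $r<\binom{d+2}{2}$ gives $\dim V_d\geq 2r$ with projection dominant, so a general $r$-tuple lies in the linear system $|\I_{\{p_i\}}(d)|$ of positive dimension. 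Bertini on $\P^2$ (combined with the observation that singularity of a curve at a fixed $p_i$ is a codimension-$2$ condition on the linear system) then supplies a smooth member $D$.

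For the injectivity, set $h=[\O_D(1)]$ and let $\mathcal M$ be the irreducible parameter space of triples $(D,p_1,\dots,p_r)$ with $D$ a smooth plane curve of degree $d$ through each $p_i$. Any nonzero $(n_i)\in\Z^r$ in the kernel gives a relation $\sum n_i p_i\sim kh$ in $\Pic D$; taking degrees forces $d\mid\sum n_i$ and $k=\sum n_i/d$. For each such tuple $(n_i)$ I would let $Z_{(n_i)}\subset\mathcal M$ denote the closed locus where the relation holds. Since there are only countably many candidate tuples, and ``very general $(p_i)$, general $D$'' translates to the complement of a countable union of proper closed subvarieties of $\mathcal M$, it suffices to show each $Z_{(n_i)}$ is a \emph{proper} subvariety.

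To establish properness I would exhibit one point of $\mathcal M\setminus Z_{(n_i)}$. The constraints $r>5$ and $r<\binom{d+2}{2}$ force $d\geq 3$, whence $D$ has genus $g=\binom{d-1}{2}\geq 1$. Pick any smooth $D_0\subset\P^2$ of degree $d$, select $j$ with $n_j\neq 0$, fix the $p_i$ for $i\neq j$ on $D_0$ arbitrarily, and vary $p_j$ along $D_0$: the left-hand side $\sum n_ip_i$ changes in $\Pic D_0$ by $n_j(p_j'-p_j)\in\Pic^0 D_0$ while the right-hand side $kh$ is fixed, and the Abel--Jacobi map $p\mapsto n_jp\in\Pic^{n_j}(D_0)$ has positive-dimensional image because $g\geq 1$ and $n_j\neq 0$. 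So the relation fails for generic $p_j\in D_0$. The main obstacle I anticipate is purely bookkeeping: translating the ``very general points, general curve'' phrasing into the countable-union-avoidance on $\mathcal M$, verifying irreducibility of $\mathcal M$, and checking that the specialization to $D_0$ really does produce a point of $\mathcal M$ outside $Z_{(n_i)}$. Modulo this, the injectivity reduces to the elementary Abel--Jacobi input above, with no deeper monodromy statement required.
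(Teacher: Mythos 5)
Your proposal is correct and follows essentially the same route as the paper: an irreducible incidence variety of (points, smooth curve) pairs, a countable family of closed ``relation loci'' indexed by $(n_i)\in\Z^r$ (the paper's $T(n_i,m)$), and avoidance of their union by a very general configuration. In fact you supply one detail the paper leaves implicit --- the Abel--Jacobi/positive-genus argument showing each relation locus is a \emph{proper} subvariety --- while your Bertini step for smoothness needs the small caveat that when $r=\binom{d+2}{2}-1$ the linear system through the points is a single curve, so one should instead note (within your own dimension count) that the incidence variety over the discriminant has dimension $<2r$ and hence cannot dominate $(\P^2)^r$.
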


\begin{proof}
The first statement is clear because $r$ points $p_i \in \mathbb P^2$ in general 
position impose independent conditions on curves of fixed degree and lie on a smooth 
curve of degree $d$. For the second statement let 
$U_d \subset \mathbb P H^0 (\O_{\mathbb P^2} (d))$ be the smooth curve locus and let 
\[
W = \{(p_1,p_2, \dots p_r,D) \in (\mathbb P^2)^r \times U_d: p_i \in D \}
\]
which comes equipped with the family of curves 
\[ 
\begin{array}{ccc}
\mathcal D & \subset & \mathbb P^2 \times W \\
\pi \downarrow  &  & \downarrow \\ 
W & = & W
\end{array}
\]
obtained by pulling back the universal curve over $U_d$ along the projection $W \to U_d$. 
Let $\mathcal H$ be the pullback of $H=\O_{\mathbb P^2} (1)$ on $\mathcal D$ 
and for each $1 \leq i \leq r$ the assignment $(p_1, \dots, p_r,D) \to p_i \in D$ 
gives a section to $\pi$ whose image is an effective Cartier divisor 
${\mathcal P}_i \subset \mathcal D$. For fixed 
$(n_i,m)=(n_1,\dots,n_r,m) \in \mathbb Z^{r+1}$, let $T(n_i,m) \subset W$ defined by 
$\O_D (\sum n_i p_i + mH) = 0 \in \Pic D$. 
If $\sum n_i + md \neq 0$, then $T(n_i,m)$ is empty by reason of degree.  
If $\sum n_i + md=0$, the line bundle 
${\mathcal M} = \O_{\mathcal D} (\sum n_i {\mathcal P_i} + md {\mathcal H})$ 
restricts to $\O_D (\sum n_i p_i + mdH)$ on the fibers and has degree zero, 
hence is trivial if and only if $H^0(D,{\mathcal M}_{D}) \neq 0$, a Zariski 
closed condition by semicontinuity (${\mathcal M}$ is flat 
over $W$ by constancy of Hilbert polynomial). Thus each $T(n_i,m) \subset W$ is 
a proper closed subset of $W$, hence a very general set of $r$ points is
contained in a smooth degree $d$ curve $D$ with no non-trivial relations 
in $\Pic D/ \langle \O_D (1) \rangle$. 
\end{proof}

\begin{prop}\label{rlines} Let $Z = \bigcup_{i=1}^r L_i$ be a 
union of $r$ general lines passing through $p \in \mathbb P^3$. 
For a very general surface $S \in |H^0 (\I_Z (d)) |$ with $d \gg 0$, 
define $\theta: \mathbb Z^r \stackrel{L_i}{\to} \Cl \O_{S,p}$. 
\begin{enumerate}[(a)]
\item The surface $S$ has the same tangent cone at $p$ as the vertex 
of a cone $X$ over a smooth plane curve of minimal degree containing 
a hyperplane section of the lines. 
\item If $r \leq 2$, then $\Cl \O_{S,p} = 0$. 
\item If $3 \leq r \leq 5$, then $\Cl \O_{S,p} \cong
\mathbb Z / 2 \mathbb Z$ 
and $\theta$ takes each $L_i$ to a generator.  
\item If $r > 5$, then $\theta$ is injective. 
\end{enumerate}
\end{prop}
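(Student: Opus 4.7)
The plan is to first identify the tangent cone of $S$ at $p$ by analyzing its local equation, then handle the four regimes separately. Near $p$, write the local equation as $F = F_e + F_{e+1} + \dots$ with leading form $F_e$ of degree $e$. Since each $L_i$ lies on $S$ and passes through $p$, every $F_k$ vanishes along $L_i$, so each $F_k$ vanishes at the $r$ points $\{[L_i]\}$ on the projectivized tangent space $\P^2$ at $p$. For very general $S \in |H^0(\I_Z(d))|$ with $d \gg 0$, the map sending $S$ to its leading form is dominant onto the linear system of degree-$e$ forms through the $[L_i]$ for the minimal such degree, which by Lemma~\ref{5pts} is the unique $e$ with $\binom{e+1}{2} \leq r < \binom{e+2}{2}$. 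The Bertini-style argument in Lemma~\ref{5pts} then shows $V(F_e)$ is a smooth plane curve $C$ of degree $e$ through the $[L_i]$, proving part~(a): the tangent cone of $S$ at $p$ is the cone over $C$, agreeing with the tangent cone at the vertex of the cone $X$ over $C$.

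Part~(b) is immediate: for $r \leq 2$ we have $e = 1$, so $S$ is smooth at $p$ and $\Cl \O_{S,p} = 0$. For part~(c), with $3 \leq r \leq 5$, we have $e = 2$ and $F_2$ a nondegenerate quadratic form, so the holomorphic Morse lemma gives $\widehat \O_{S,p} \cong \C[[x,y,z]]/(xy-z^2)$, the $\rdA_1$ singularity with $\Cl \widehat \O_{S,p} \cong \Z/2\Z$ (cf.\ Corollary~\ref{doublepoint}); in particular $\Cl \O_{S,p} \subseteq \Z/2\Z$. To show each $[L_i]$ is the nontrivial element, I use a direct blow-up argument. Smoothness of $C$ implies the blow-up $\widetilde S \to S$ of $p$ is smooth along $E_S \cong C \cong \P^1$, and the strict transform $\widetilde L_i$ meets $E_S$ transversally at $p_i = [L_i]$. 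If $[L_i]$ were zero in $\Cl \O_{S,p}$, a local equation $f$ for $L_i$ near $p$ would pull back to $\widetilde S$ with divisor $\widetilde L_i + E_S$ (since $L_i$ has multiplicity one at $p$), and dividing by a local equation for $E_S$ would produce a rational function on $\P^1$ with divisor $p_i$, contradicting $\deg = 0$ for principal divisors.

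The same strategy proves part~(d). If $\sum n_i [L_i] = 0$ in $\Cl \O_{S,p}$, take $f \in \O_{S,p}$ with $\mathrm{div}(f) = \sum n_i L_i$; pulling back to $\widetilde S$ and dividing by a local equation for $E_S$ raised to the power $\sum n_i$ produces a nonzero rational function on $C = E_S$ with divisor $\sum n_i p_i$. Hence $\sum n_i \O_C(p_i) = 0$ in $\Pic C$, and \emph{a fortiori} in $\Pic C / \langle \O_C(1) \rangle$. Since the very general choice of $S$ yields $C$ a very general smooth plane curve of degree $e \geq 3$ through $r > 5$ very general points $p_i$, Lemma~\ref{5pts} forces all $n_i = 0$, proving $\theta$ injective. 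The main technical obstacles are (i) verifying that the leading form of a very general $S$ is itself very general in its minimal-degree linear system through the $r$ points, which follows from a standard dimension count once $d \gg 0$, and (ii) checking smoothness of $\widetilde S$ along $E_S$, which is immediate from smoothness of $C$ in suitable blow-up charts; this second point is what permits the divisor pullback and restriction computations to behave as in the smooth case.
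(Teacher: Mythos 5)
Your proof is correct and follows the same essential strategy as the paper: identify the tangent cone of $S$ at $p$ as the cone over a very general smooth plane curve $C$ of minimal degree through the $r$ general directions, resolve by one blow-up, and detect classes by restricting to the exceptional curve $E_S \cong C$, where Lemma~\ref{5pts} supplies the independence. The only structural difference is that for parts (c) and (d) you inline the restriction-to-$E$ computation directly on $\widetilde S$, whereas the paper packages it as Lemma~\ref{commute} and transfers the question to the auxiliary cone $X$ over $C$; your version is self-contained and arguably more elementary, at the cost of redoing by hand what the comparison lemma records once. Two small imprecisions are worth fixing, though neither is a gap. First, if $\mathrm{div}(f)=\sum n_i L_i$ on $\Spec\O_{S,p}$, then $\mathrm{div}(\pi^*f)=\sum n_i\widetilde L_i+mE_S$ with $m=\ord_{E_S}(\pi^*f)$, which need not equal $\sum n_i$ (indeed in part (c) the relation $1=m\cdot(-E_S^2)=2m$ is exactly the contradiction), so you should divide by the $m$-th power of a local equation, not the $(\sum n_i)$-th. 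Second, since $\O_{\widetilde S}(E_S)|_{E_S}\cong\O_C(-1)$, what you obtain after dividing is a rational section of $\O_C(m)$ rather than a rational function, so the correct conclusion is $\sum n_i\O_C(p_i)\in\langle\O_C(1)\rangle$, i.e.\ triviality in $\Pic C/\langle\O_C(1)\rangle$ --- not triviality in $\Pic C$ itself. Fortunately that weaker statement is precisely what Lemma~\ref{5pts} addresses, so your argument lands where it needs to.
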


\begin{proof}
The surface $S$ is smooth at $p$ for $r \leq 2$, hence $\Cl \O_{S,p}=0$ 
and $\theta=0$, proving (b). 
For $r > 2$, let $Z \subset \mathbb A^3$ 
be the cone over $r$ general points in $\mathbb P^2$ with vertex $p=(0,0,0)$. 
The ideal $I_Z$ is the homogeneous ideal of the $r$ points in $\mathbb P^2$, 
so the general surface $S$ containing $Z$ has equation $F+G$, 
where $\deg F = d$ and $G$ consists of higher order terms. 
By Lemma \ref{5pts}, $F$ is the equation of the cone $X$ over a smooth 
degree $d$ plane curve $V$ for $S$ general, so $S$ and $X$ have equal tangent cones at $p$ which resolve by one blowup with equal exceptional 
curves $E_S= E_X$ isomorphic to the plane curve defined by $F$. This gives (a).

For $3\le r\le 5$, $d=2$, $X$ has an $A_1$ singularity, and thus so does $S$ \cite[Char. A$3'$]{durfee}. This, the fact that a complete $\rdA_1$ has class group $\Z/2\Z$, and the fact that the $L_i$ are non-Cartier on $S$ immediately gives (c).

For $r>5$, by Lemma~\ref{commute} we obtain the diagram
\[
\begin{CD}
\mathbb Z^r @>{L_i}>>  \Cl \O_{S,p}  \,\,\subset \,\, \Cl \widehat \O_{S,p} @>>> \Pic E_S \\ 
@| \,\, @. \,\, @| \\
\mathbb Z^r @>{L_i}>>   \Cl \O_{V,p} \,\, \subset \,\, \Cl \widehat \O_{V,p} @>>> \Pic E_V 
\end{CD}
\]
Since the lines are general, their classes are independent in $\Pic E_X$ 
by Lemma \ref{5pts}, so the composition along the bottom row is injective, and therefore so is that along the top, which gives (d). 
\end{proof}


\begin{cor}\label{piclines}
Let $S$ and $r$ be as in Proposition \ref{rlines}. Then \\
(a) $r \leq 2 \Rightarrow \Pic S  = \Cl S$. \\
(b) $3 \leq r \leq 5 \Rightarrow \Pic S = \{\sum a_i L_i + b H \in \Cl S: 2 | \sum a_i \}$. \\
(c) $r > 5 \Rightarrow \Pic S = \{bH \in \Cl S: b \in \mathbb Z\}$.  
\end{cor}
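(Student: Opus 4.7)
The plan is to realize $\Pic S$ as the kernel of the natural restriction $\rho : \Cl S \to \Cl \O_{S,p}$ and then read off the three cases from Proposition \ref{rlines}. Because $S$ is smooth away from its unique singularity at $p$, a Weil divisor on $S$ is Cartier everywhere precisely when it is Cartier at $p$; combined with the local-to-global surjectivity, which follows from the local case of Lemma \ref{intrep} (every height-one prime of $\O_{S,p}$ extends to a codimension-one subvariety of $S$ through $p$), this gives the short exact sequence
\[
0 \to \Pic S \to \Cl S \xrightarrow{\rho} \Cl \O_{S,p} \to 0.
\]

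Next I would apply our Noether-Lefschetz theorem \cite[Thm.~1.1]{BN} to the base locus $Z = \bigcup L_i$: for $d \gg 0$ and very general $S \supset Z$, the group $\Cl S$ is freely generated by the hyperplane class $H$ and the classes of the $r$ lines $L_1, \dots, L_r$. Under $\rho$ the class $H$ maps to zero (represent it by a hyperplane missing $p$), while each $L_i$ maps to $\theta(e_i) \in \Cl \O_{S,p}$ by definition of $\theta$. Consequently a class $\sum a_i L_i + bH$ is Cartier on $S$ if and only if $\sum a_i \theta(e_i) = 0$ in $\Cl \O_{S,p}$, so the problem reduces to computing $\ker \theta$.

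Each part of the corollary is then an immediate translation of the corresponding part of Proposition \ref{rlines}. For (a), Proposition \ref{rlines}(b) gives $\Cl \O_{S,p} = 0$, so $\rho = 0$ and $\Pic S = \Cl S$. For (b), Proposition \ref{rlines}(c) shows that $\theta$ sends every $L_i$ to the nontrivial class of $\Cl \O_{S,p} \cong \Z/2\Z$, so the Cartier condition becomes $2 \mid \sum a_i$. For (c), Proposition \ref{rlines}(d) gives injectivity of $\theta$, forcing each $a_i = 0$, and leaving $\Pic S = \Z \langle H \rangle$.

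No step presents a real obstacle once the exact sequence and the Noether-Lefschetz description of $\Cl S$ are in place; each case is a direct reading of Proposition \ref{rlines}. The only point warranting care is the independence of $H$ from the classes $L_i$ in $\Cl S$, which is precisely the free generation supplied by \cite[Thm.~1.1]{BN} for $d \gg 0$ and $S$ very general through $Z$.
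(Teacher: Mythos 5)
Your argument is correct and follows essentially the same route as the paper: the paper likewise identifies $\Pic S$ as the kernel of the restriction $\Cl S \to \Cl \O_{S,p}$ (citing \cite[Prop.~2.15]{GD} for the exact sequence where you give a direct local argument), invokes the free generation $\Cl S \cong \mathbb Z^{r+1}$ by $H$ and the $L_i$, and reads off the three cases from Proposition \ref{rlines}. The only cosmetic difference is that you also assert surjectivity of $\rho$, which is true but not needed for the conclusion.
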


\begin{proof}
This follows immediately from the Theorem in view of the exact sequence 
\[
0 \to \Pic S \to \Cl S \to \Cl \O_{S,p}.
\]
\cite[Prop. 2.15]{GD} and the identification 
$\Cl S \cong \mathbb Z^{r+1}$ generated by $L_i$ and $H=\O (1)$. 
\end{proof}

In Proposition \ref{rlines} it is essential that the lines be 
in general position. For example, if $C$ consists of $r$ planar 
lines passing through $p$, then the general surface $S$ 
containing $C$ is smooth at $p$ and $\Cl S = \Pic S$ is freely 
generated by $H$ and the lines $L_{i}$. A more interesting configuration is the 
pinwheel featured in the following proposition.

\begin{prop}\label{pinwheel}
For $r \geq 2$, let $C \subset \mathbb P^3$ be the union of $r$ 
general lines $L_1, L_2, \dots, L_r$ contained in a plane $H$ 
passing through $p$ and a line $L_0 \not \subset H$ through $p$. 
Then there are local coordinates $x,y,z$ and $a_i \in k$ for which 
\begin{equation}\label{pinhead}
I_C = (xz,yz,\Pi_{j=1}^{r} (x+a_j y)).
\end{equation}
The very general surface $S$ containing $C$ has an $\rdA_{r-1}$ 
singularity at $p$ and the restriction map 
$
\theta: \Cl S \to \Cl \O_{S,p} \cong \mathbb Z / r \mathbb Z
$
sends the lines $L_1, \dots L_r$ to $1$ and $L_0$ to $-1$. 
\end{prop}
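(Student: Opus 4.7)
I would approach this by a local analysis at $p$: compute $I_C$, identify the singularity type of the very general $S$, and pin down the classes $[L_i]$ in $\Cl \O_{S,p}$ using principal divisors together with the analytic normal form. Choosing affine coordinates $x,y,z$ centered at $p$ with $L_0 = V(x,y)$, $H = V(z)$, and $L_i = V(z, x+a_iy)$ for distinct $a_i \in k$, iterated intersection of line ideals yields $I_{L_0} \cap I_{L_i} = (x+a_iy, yz)$ and $\bigcap_{i \geq 1} I_{L_i} = (z, \prod_j(x+a_jy))$, confirming (\ref{pinhead}). Hence the very general $S \in |H^0(\I_C(d))|$ has local equation $f = A \cdot xz + B \cdot yz + E \cdot \prod_{j=1}^r(x+a_jy)$ with $A, B, E$ generic power series whose constant terms $\alpha, \beta, \gamma$ are nonzero.

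For $r \geq 3$ the leading quadratic of $f$ at $p$ is $z(\alpha x + \beta y)$, a rank-two form factoring into two transverse linear factors---the tangent cone of an $\rdA_n$ singularity. After the linear change $u = \alpha x + \beta y$, $v = z$, $w = y$ (WLOG $\alpha \neq 0$), one computes $f = uv + c w^r + (\text{terms divisible by } u \text{ or } v, \text{ plus pure-}w \text{ corrections})$ with $c \neq 0$ a generic constant. A routine Morse-with-parameters / Weierstrass preparation argument---absorbing $u$- and $v$-divisible terms by analytic changes of $v$ and $u$, then bundling the pure-$w$ part into $w^r \cdot (\text{unit})$---gives an analytic isomorphism $\widehat \O_{S,p} \cong k[[U,V,W]]/(UV - W^r)$, so $p$ is $\rdA_{r-1}$. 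For $r=2$ the leading quadratic $\alpha xz + \beta yz + \gamma(x+a_1y)(x+a_2y)$ is generically nondegenerate, yielding $\rdA_1$ directly.

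Two principal divisors then give linear relations in $\Cl \O_{S,p}$. Substituting $x = -a_iy$ in $f$ kills the third term (since $\prod_j(a_j - a_i)$ has a zero factor at $j=i$), leaving $f(-a_iy, y, z) = yz \cdot Q_i(y,z)$ with $Q_i(0) = \beta - a_i\alpha \neq 0$; thus $(x+a_iy)|_S$ has local divisor $L_0 + L_i$ at $p$, giving $[L_0] + [L_i] = 0$. Similarly $f(x,y,0) = E(x,y)\prod_j(x+a_jy)$ shows $(z)|_S$ has local divisor $\sum_{j=1}^r L_j$, so $\sum_j [L_j] = 0$. These force $[L_1] = \cdots = [L_r] = -[L_0] =: m$ with $rm = 0$.

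The main obstacle is showing $m$ generates $\Z/r\Z$. Under the analytic iso above, the tangent direction of each $L_i$ ($i \geq 1$) transports to $(-\alpha a_i + \beta, 0, 1)$, which lies in the $V(V)$ component of the tangent cone $UV = 0$ and generically avoids the singular $W$-axis. For a smooth curve $\gamma$ tangent to a direction $(d, 0, 1)$ with $d \neq 0$ in the standard model $UV = W^r$, a direct calculation gives $\mathrm{div}(U - dW) = [M_+] + [\gamma]$ where $M_+ = V(U,W)$ is a standard generator of $\Cl \cong \Z/r\Z$; hence $[\gamma] = -[M_+]$ is itself a generator. Applying this to $L_i$ and using the Mori injection $\Cl \O_{S,p} \hookrightarrow \Cl \widehat \O_{S,p}$ shows $[L_i]$ generates $\Cl \O_{S,p} \cong \Z/r\Z$. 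Fixing the sign convention so that $[L_i] = 1$ then forces $[L_0] = -1$, as claimed.
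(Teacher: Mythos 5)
Your route is broadly parallel to the paper's: you compute $I_C$ directly by intersecting the prime ideals of the lines (the paper instead uses liaison against the complete intersection $\Pi(x+a_jy)=zx=0$), and you reduce the local equation to the $\rdA_{r-1}$ normal form much as the paper does with its explicit substitutions $x'=x-Ay$, $z'=z+m$. Your use of the principal divisors $x+a_iy$ and $z$ to obtain the relations $[L_0]+[L_i]=0$ and $\sum_{j}[L_j]=0$ is a genuine economy not present in the paper: it reduces the whole problem to showing that a single class generates $\Z/r\Z$.

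That last step, however, has a gap. You assert that for a smooth curve $\gamma$ in the model $UV=W^r$ tangent to $(d,0,1)$, $d\neq 0$, one has $\mathrm{div}(U-dW)=M_+ +\gamma$. This identity holds only if $\gamma$ is actually contained in $V(U-dW)$, which forces $\gamma$ to be the particular curve $U=dW,\ V=W^{r-1}/d$; a general smooth curve with that tangent direction satisfies $U-dW=O(t^2)$ along $\gamma$ without vanishing identically, so $\gamma$ is not a component of $\mathrm{div}(U-dW)$ and the computation determines the class of the model curve, not of $\gamma$. Since the analytic isomorphism carrying $S$ to the normal form involves higher-order coordinate changes, the images of the $L_i$ are only known to have the stated tangent directions, not to lie in those hyperplanes. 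What is missing is the fact that the class of a smooth branch through an $\rdA_{r-1}$ point is already determined by which plane of the tangent cone $UV=0$ contains its tangent line, provided that line is not the axis $U=V=0$; this is exactly what the paper supplies by blowing up once and checking that the strict transform of each $L_j$ ($j\geq 1$) meets one end component of the exceptional divisor transversally at a smooth point, while $\tilde L_0$ meets the other, pinning the classes at $\pm 1$. Note that your relations alone cannot close this: a multiplicity count at $p$ only excludes $k[L_1]=0$ for odd $k$, so some resolution-level input is unavoidable. With the one-blow-up argument substituted for the $\mathrm{div}(U-dW)$ claim, your proof is complete.
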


\begin{proof} 
If $z=0$ is the plane containing the $r$ general lines, it is clear that the ideal 
(\ref{pinhead}) cuts out $C$ set-theoretically for some $a_j$. 
To see that this is the total ideal for $C$, observe that the closure ${\overline C}$ 
in $\mathbb P^3$ is contained in the complete intersection 
$X: \Pi (x+a_j y) = zx=0$, which links $C$ to the planar multiplicity 
$(r-1)$-line $Z$ given by $x=y^{r-1}=0$. Since $Z$ is planar, we can 
write down the total ideal for ${\overline C}$ using the cone construction 
from liaison theory. 

The general surface $S$ containing $C$ has local equation 
$0=xz - Ayz + B \Pi (x+a_j y)$ for general $A,B$. 
Setting $x^\prime = x - Ay$ 
this becomes $0=x^\prime z + B \Pi (x^\prime+(A+a_j)y)$. 
The product on the right can be written 
$x^\prime m + B \Pi (A+a_j) y^r$, so if we set $z^\prime = z+m$ 
the equation for $S$ takes the form $x^\prime z^\prime + u y^r=0$, 
where $u = B \Pi (A+a_j)$ is a unit, exhibiting the equation of an 
$\rdA_{r-1}$ singularity, which has local Picard 
group $\Cl \O_{S,p} \cong \mathbb Z / r \mathbb Z$. 

To find the images of the lines $L_j$ in $\Cl \O_{S,p}$, 
we blow up $S$ and check the intersections of the strict transforms 
$\tilde L_i$ with the exceptional divisors $E_i$, starting with $L_0$, 
for which the ideal is $(x,y)=(x^\prime,y)$. Adding projective coordinates $X,Y,Z$ 
for the blow-up and looking on the patch $Z=1$, we have 
$(x^\prime,y)=(zX,zY)=z(X,Y)$ so $\tilde L_0$ is the line $X=Y=0$, 
which meets the component $x^\prime=y=z^\prime=X=0$ of the exceptional divisor 
at the origin, a smooth point of $\tilde S$. Since this is the 
only intersection of $\tilde L_0$ and $E$, $\tilde L_0 = (1,0,...0)$ 
in $\Cl \O_{S,p}$ and is thus a generator. 

For $1 \leq j \leq r$, the ideal for $L_j$ is
\[
(z, x+a_j y)=(z,x^\prime +(A+a_j)y)=(z^\prime-m,x^\prime+(A+a_j)y)
\]
and $\tilde L_j$ meets the other component of $E$ in a single point 
away from the singularity of $\tilde S$. For example, on the patch $X=1$, 
the total transform is given by $(z^\prime-m,x^\prime + (A+a_j)y)=x(Z-m/x^\prime,1+(A+a_j)Y)$, 
so the strict transform is $(Z-m/x^\prime,1+(A+a_j)Y)$. The intersection with the exceptional 
divisor component $x=y=z=Z=0$ is given by ideal 
$(x,y,z,Z,Z-m/x^\prime,1+(A+a_j)Y)=(x,y,z,Z,1+(A+a_j)Y)$, 
which defines a single point away from the singularity of $\tilde S$. 
\end{proof}

\begin{ex}\label{3doublelines}{\em
Consider the subscheme $Z \subset \mathbb P^3$ consisting of the union of 
three double lines defined by ideals $(x,y^2), (x^2,z)$, and $(y,z^2)$ with respective supports $L_1, L_2$ and $L_3$. There is an obvious containment 
$I = (xyz,x^2 y, y^2 z, z^2 x) \subset (x,y^2) \cap (x^2,z) \cap (y,z^2)$ 
and in fact they are equal because $I$ has minimal graded 
resolution 
\[
0 \to S(-4)^3 \to S(-3)^4 \to I \to O
\]
and sheafifying gives the resolution for an ACM curve $W$ of degree $6$ 
(it's linked to a degenerate twisted cubic curve by two cubics). 
Thus $Z \subset W$ and both have degree $6$ so they are equal. 
The general surface $S$ of high degree containing $Z$ has equation of the form $Axyz + Bx^2y+ Cxz^2+Dy^2z$, where $A,B,C$, and $D$ do not vanish at 
the point $p=(0,0,0,1)$ of intersection of the lines, therefore the 
singularity of $S$ at $p$ has the same tangent cone as the vertex 
of the cone $V$ over the plane curve $C_0$ defined by 
$axyz+bx^2y+ cxz^2+dy^2z$ where $a,b,c,d$ are the respective constant 
terms, which is smooth for general $A,B,C,D$. 
The singularities resolve in one blow-up with exceptional divisors 
$E_S = E_V \cong C_0$. Thus we can apply Lemma~\ref{commute} 
to obtain a commutative diagram
\[
\begin{CD}
\mathbb Z^3 @>{L_i}>> \Cl \O_{S,p} \,\, \subset \,\, \Cl \widehat \O_{S,p} @>>> \Pic E_S \\ 
@| \,\, @. \,\, @| \\
\mathbb Z^3 @>{L_i}>>   \Cl \O_{V,p} \,\, \subset \,\, \Cl \widehat \O_{V,p} @>>> \Pic E_V 
\end{CD}
\]

By the main theorem of~\cite{BN}, the class group $\Cl S$ is freely generated by the lines $L_i$ and $\O_S (1)$; The images under 
the surjective map 
$\Cl S \to \Cl \O_{V,p} \cong \Pic C_0/\langle\O_{C_0}(1)\rangle$ correspond via the construction of blowing up and completing along 
the exceptional curve to the classes in corresponding to the 
points $P_1=(0,0,1), P_2=(0,1,0)$, and $P_3=(1,0,0)$ respectively. 
The line $x=0$ intersects $C_0$ in a double point at $P_1$ and a 
reduced point at $P_2$, hence $2P_1+P_2\in\O_{C_0}(1)$ and thus 
$P_2=-2P_1$ in $\Cl \O_{S,p}$. 
Likewise $P_3=-2P_2=4P_1$ and $P_1=-2P_3=-8P_1$, so 
$\Cl \O_{S,p} \cong \Z/9\Z$. 
\em}\end{ex}

\end{document}